\documentclass[oneside,english]{amsart}
\usepackage[T1]{fontenc}
\usepackage[latin9]{inputenc}
\usepackage{geometry}
\geometry{verbose,tmargin=1in,bmargin=1in,lmargin=1in,rmargin=1in}
\usepackage{amsthm}
\usepackage{amstext}
\usepackage{amssymb}
\usepackage{esint}

\makeatletter
\numberwithin{figure}{section}

\usepackage[T1]{fontenc}
\usepackage[latin9]{inputenc}
\usepackage{geometry}
\geometry{verbose,tmargin=1in,bmargin=1in,lmargin=1in,rmargin=1in}
\usepackage{amsthm}
\usepackage{amstext}
\usepackage{amssymb}
\usepackage{esint}

\makeatletter
\numberwithin{figure}{section}


\usepackage[T1]{fontenc}
\usepackage[latin9]{inputenc}
\usepackage{geometry}
\geometry{verbose,tmargin=1in,bmargin=1in,lmargin=1in,rmargin=1in}
\usepackage{amsthm}
\usepackage{amstext}
\usepackage{amssymb}
\usepackage{esint}

\makeatletter
\numberwithin{figure}{section}


\usepackage[T1]{fontenc}
\usepackage[latin9]{inputenc}
\usepackage{geometry}
\geometry{verbose,tmargin=1in,bmargin=1in,lmargin=1in,rmargin=1in}
\usepackage{amsthm}
\usepackage{amstext}
\usepackage{amssymb}
\usepackage{esint}

\makeatletter
  \theoremstyle{plain}
  \newtheorem{thm}{\protect\theoremname}[section]
  \theoremstyle{definition}
  \newtheorem{defn}{\protect\definitionname}[section]
  \theoremstyle{remark}
  \newtheorem{rem}{\protect\remarkname}[section]
  \theoremstyle{plain}
  \newtheorem{lem}{\protect\lemmaname}[section]
  \theoremstyle{remark}
  \newtheorem*{rem*}{\protect\remarkname}

\makeatother

\usepackage{babel}
  \providecommand{\definitionname}{Definition}
  \providecommand{\lemmaname}{Lemma}
  \providecommand{\remarkname}{Remark}
\providecommand{\theoremname}{Theorem}

\makeatother

\usepackage{babel}

\usepackage{color}

\makeatother

\usepackage{babel}

\makeatother

\usepackage{babel}
\begin{document}

\title{AN ASYMPTOTIC VISCOSITY SELECTION RESULT FOR THE REGULARIZED NEWTON
DYNAMIC}

\author{Boushra Abbas \\
 \\
 I3M UMR CNRS 5149, Université Montpellier II,\\
 34095 Montpellier, France}

\date{April 29, 2015}
\begin{abstract}
Let $\Phi:\mathcal{H}\longrightarrow\mathbb{R\cup}\left\{ +\infty\right\} $
be a closed convex proper function on a real Hilbert space $\mathcal{H}$,
and $\partial\Phi:\mathcal{H}\rightrightarrows\mathcal{H}$ its subdifferential.
For any control function $\epsilon:\mathbb{R}_{+}\longrightarrow\mathbb{R}_{+}$
which tends to zero as $t$ goes to $+\infty$, and $\lambda$ a positive
parameter, we study the asymptotic behavior of the trajectories of
the regularized Newton dynamical system 
\begin{eqnarray*}
 &  & \upsilon\left(t\right)\in\partial\Phi\left(x\left(t\right)\right)\\
 &  & \lambda\dot{x}\left(t\right)+\dot{\upsilon}\left(t\right)+\upsilon\left(t\right)+\varepsilon\left(t\right)x\left(t\right)=0.
\end{eqnarray*}
Assuming that $\varepsilon\left(t\right)$ tends to zero moderately
as $t$ goes to $+\infty$, we show that the term $\varepsilon\left(\cdot\right)x\left(\cdot\right)$
asymptotically acts as a Tikhonov regularization, which forces the
trajectories to converge to a particular equilibrium. Precisely, when
$C=\textrm{argmin}\Phi\neq\emptyset$, and $\varepsilon (\cdot)$ is a ``slow''
control, i.e., $\int_{0}^{+\infty}\varepsilon\left(t\right)dt=+\infty$,
then each trajectory of the system converges weakly, as $t$ goes
to $+\infty$, to the element of minimal norm of the closed convex
set $C.$ When $\Phi$ is a convex differentiable function whose gradient
is Lipschitz continuous,
 we show that the strong convergence property is satisfied. Then we examine the effect of other types of regularizing methods.
\end{abstract}

\maketitle

\section{Introduction }

Throughout this paper, $\mathcal{H}$ is a real Hilbert space with
scalar product $\left\langle \cdot,\cdot\right\rangle $, and $\|x\|^{2}=\left\langle x,x\right\rangle $
for any $x\in\mathcal{H}$. Given $\Phi:\mathcal{H\longrightarrow\mathbb{R}\cup}\left\{ +\infty\right\} $
a closed convex proper function, we will analyze some asymptotic viscosity
selection properties for the regularized Newton dynamic governed by
$\Phi$.

Let us first recall some basic facts about this dynamical system.
Given $\lambda$ a positive constant, the Regularized Newton dynamic
((RN) for short) attached to solving the minimization problem 
\begin{equation*}
(\mathcal P )\quad \underset{x\in\mathcal{H}}{\min}\ \Phi\left(x\right)
\end{equation*}
is written as follows 
\begin{eqnarray}
 &  & \upsilon\left(t\right)\in\partial\Phi\left(x\left(t\right)\right)\label{0}\\
 &  & \lambda\dot{x}\left(t\right)+\dot{\upsilon}\left(t\right)+\upsilon\left(t\right)=0,\label{00}
\end{eqnarray}
where the subdifferential of $\Phi$ at $x\in\textrm{dom}\Phi$ is
classically defined by 
\[
\partial\Phi\left(x\right)=\left\{ p\in\mathcal{H}:\:\Phi\left(y\right)\geq\Phi\left(x\right)+\left\langle p,y-x\right\rangle \;\forall y\in\mathcal{H}\right\} .
\]
When $\Phi$ is a smooth function, (RN) is equivalent to 
\[
\lambda\dot{x}\left(t\right)+\nabla^{2}\Phi(x(t))+\nabla\Phi(x(t))=0
\]
where $\lambda$ acts as a Levenberg-Marquard regularization parameter
of the continuous Newton equation, whence the terminology. This dynamical
system has been first introduced in \cite{AS}, \cite{ARS}. Its extension
to the case of two potentials gives rise to a new class of forward-backward
algorithms, see \cite{AA}, \cite{AAS}, \cite{APR}. In \cite{AS},
for a general closed convex and proper function $\Phi$, it is shown
that the Cauchy problem for the $(x,\upsilon)$ system (\ref{0})-(\ref{00})
admits a unique strong global solution. In addition, under the sole
assumption that $C=\textrm{argmin}\Phi\neq\emptyset$, for any orbit
of (\ref{0})-(\ref{00}), $x\left(t\right)$ converges weakly to
an element of $C$, as $t$ goes to $+\infty$.

In many applications, a particular stationary solution is more interesting
than others due to physical, economic or design considerations. When
we have the global convergence of trajectories, one could let the
trajectory reach a particular target equilibrium by appropriately
adjusting the initial conditions. Nevertheless, in many practical
situations it is not possible to have an accurate control of the initial
state. An alternative approach consists in introducing a term into
the system which forces convergence to the desired stationary solution,
independently of the initial state. Such a term should vanish at
infinity in order to recover, at least asymptotically, an equilibrium
point of (\ref{0})-(\ref{00}).

The above discussion motivates the introduction of the following abstract
evolution system: \begin{subequations} 
\begin{eqnarray}
 &  & \upsilon\left(t\right)\in\partial\Phi\left(x\left(t\right)\right)\label{3-a}\\
 &  & \lambda\dot{x}\left(t\right)+\dot{\upsilon}\left(t\right)+\upsilon\left(t\right)+\varepsilon\left(t\right)x\left(t\right)=0,\label{3-b}
\end{eqnarray}
\end{subequations} where $\epsilon:\mathbb{R}_{+}\longrightarrow\mathbb{R}_{+}$
is an open-loop control function, that tends to zero as $t$ goes
to $+\infty$.

Let us briefly describe our approach. Following a similar device as
in \cite{AS}, setting $\mu=\frac{1}{\lambda}$, and introducing the
new unknown function $y\left(\cdot\right)=x\left(\cdot\right)+\mu\upsilon\left(\cdot\right)$,
we can equivalently rewrite (\ref{3-a})-(\ref{3-b}) as 
\begin{eqnarray*}
 &  & x\left(t\right)=\textrm{prox}_{\mu\Phi}\left(y\left(t\right)\right),\\
 &  & \dot{y}\left(t\right)+\mu\nabla\Phi_{\mu}\left(y\left(t\right)\right)+\mu\epsilon\left(t\right)\textrm{prox}_{\mu\Phi}\left(y\left(t\right)\right)=0
\end{eqnarray*}
where $\textrm{prox}_{\mu\Phi}$ is the proximal mapping associated
to $\mu\Phi$. Recall that $\textrm{prox}_{\mu\Phi}=\left(I+\mu\partial\Phi\right)^{-1}$
is the resolvent of index $\mu>0$ of the maximal monotone operator
$\partial\Phi$, and $\nabla\Phi_{\mu}=\frac{1}{\mu}(I-\left(I+\mu\partial\Phi\right)^{-1})$
is its Yosida approximation of index $\mu>0$. As a key point of our analysis, we notice
that $\textrm{prox}_{\mu\Phi}$ is a gradient vector field, namely
$\textrm{prox}_{\mu\Phi}=\nabla\psi$, with 
\begin{equation}
\psi(y)=\mu\left(\Phi^{*}\right)_{\frac{1}{\mu}}\left(\frac{1}{\mu}y\right),
\end{equation}
where $\Phi^{*}$ is the Fenchel conjugate of $\Phi$. Doing so, we
can reformulate our dynamic in the form \begin{subequations} 
\begin{eqnarray}
 &  & x\left(t\right)=\left(I+\mu\partial\Phi\right)^{-1}\left(y\left(t\right)\right),\label{5-a}\\
 &  & \dot{y}\left(t\right)+\mu\nabla\Phi_{\mu}\left(y\left(t\right)\right)+\epsilon\left(t\right)\nabla\mu\psi\left(y\left(t\right)\right)=0.\label{5-b}
\end{eqnarray}
\end{subequations} Equation (\ref{5-b}) is a particular case of
the multiscale dynamic 
\begin{equation}
\dot{y}\left(t\right)+\partial\Theta(y(t))+\epsilon\left(t\right)\partial\Psi\left(y\left(t\right)\right)\ni0\ \label{gen-visc}
\end{equation}
where $\Theta$ and $\Psi$ are two convex potential functions. Following
\cite{A}, $\Psi$ will be referred to as the \textquotedbl{}viscosity
function\textquotedbl{}. A detailed study of the asymptotic behavior
of the orbits of (\ref{gen-visc}) can be found in \cite{AMO}, \cite{Cabot},
\cite{CPS}, \cite{FurMiyKen}, \cite{Hir}. Following \cite{AMO}
and \cite{Cabot}, we focus our attention on the case where the parametrization
$t\mapsto\epsilon\left(t\right)$ satisfies the following \textquotedbl{}slow\textquotedbl{}
decay property 
\[
\int_{0}^{+\infty}\varepsilon\left(t\right)dt=+\infty.
\]

This condition expresses that $\varepsilon\left(\cdot\right)$ does
not tend to zero too rapidly, which allows the term $\varepsilon\left(\cdot\right)x\left(\cdot\right)$
to be effective asymptotically. In that case, we will show an asymptotic
selection property. Precisely, in Theorem \ref{Theorem 3.2.}, under
some additional moderate growth property on $\epsilon(\cdot)$, we
will show that, for any trajectory $(x,\upsilon)$ of (\ref{3-a})-(\ref{3-b}),
$x(\cdot)$ converges weakly to the minimizer of $\Phi$ which also
minimizes $\psi$ over all minima of $\Phi$. Then we show that this
element is nothing but the element of minimal norm of the solution
set ${\rm \mbox{argmin}}\Phi$, i.e., 
\[
x(t)\rightharpoonup{\rm \mbox{proj}}_{{\rm \mbox{argmin}}\Phi}0\quad\textrm{as}\ t\to+\infty.
\]
Thus we recover the classical Tikhonov viscosity selection principle,
which consists in selecting the solution of minimal norm.

This result can be viewed as an asymptotic selection property: by
using such a slow control $\varepsilon$, one can force all the trajectories
to converge to the same equilibrium, which here is the equilibrium
of minimal norm. This makes a sharp contrast with the non controlled
situation, or fast control, where the limits of the trajectories depend
on the initial data, and are in general difficult to identify.

The paper is organized as follows: we first show the existence and
uniqueness of a strong global solution to the Cauchy problem (\ref{3-a})-(\ref{3-b}).
Then, we study the asymptotic convergence as $t$ goes to $+\infty$
  of the trajectories of (\ref{3-a})-(\ref{3-b}).
In our main result, Theorem \ref{Theorem 3.2.}, under the key assumption
that $\epsilon\left(\cdot\right)$ is a \textquotedbl{}slow control\textquotedbl{}, 
i.e., $\int_{0}^{+\infty}\varepsilon\left(t\right)dt=+\infty$, and
has moderate growth,  we show the weak convergence of the trajectories toward the optimal solution of problem $(\mathcal P)$ of minimum norm. 
When $\Phi$ is a convex differentiable function whose gradient is
Lipschitz continuous, we show that the convergence holds for the strong topology.
Finally, we  examine some variants of this principle
of hierarchical minimization.

\section{Existence and Uniqueness of Global Solutions}

We consider the Cauchy problem for the differential inclusion system
(\ref{3-a})-(\ref{3-b}) \begin{subequations} 
\begin{eqnarray}
 &  & \upsilon\left(t\right)\in\partial\Phi\left(x\left(t\right)\right)\label{2-a}\\
 &  & \lambda\dot{x}\left(t\right)+\dot{\upsilon}\left(t\right)+\upsilon\left(t\right)+\varepsilon\left(t\right)x\left(t\right)=0\label{2-b}\\
 &  & x\left(0\right)=x_{0},\;\upsilon\left(0\right)=\upsilon_{0}\label{2-c}
\end{eqnarray}
\end{subequations} First, we are going to define a notion of strong
solution to the above system. Then, we shall reformulate this system
with the help of the Minty representation of $\partial\Phi$. Finally,
we shall prove the existence and uniqueness of a strong solution to
system (\ref{2-a})\textendash (\ref{2-c}), by applying the Cauchy\textendash Lipschitz
theorem to this equivalent formulation.

\subsection{Definition of strong solutions}

We say that the pair $\left(x\left(\cdot\right),\upsilon\left(\cdot\right)\right)$
is a strong global solution of (\ref{2-a})\textendash (\ref{2-c})
iff the following properties are satisfied:

$i)$ \ $x\left(\cdot\right),\upsilon\left(\cdot\right):\left[0,+\infty\right[\longrightarrow\mathcal{H}$
are absolutely continuous on each interval $\left[0,b\right]$, $0<b<+\infty;$

$ii)$ \ $\upsilon\left(t\right)\in\partial\Phi\left(x\left(t\right)\right)$
\ for all $t\in\left[0,+\infty\right[;$

$iii)$ \ $\lambda\dot{x}\left(t\right)+\dot{\upsilon}\left(t\right)+\upsilon\left(t\right)+\varepsilon\left(t\right)x\left(t\right)=0$
\ for almost all $t\in\left[0,+\infty\right[;$

$iv)$ \ $x\left(0\right)=x_{0},\;\upsilon\left(0\right)=\upsilon_{0}$.

\subsection{Equivalent formulation as a classical differential equation}

In order to solve system (\ref{2-a})\textendash (\ref{2-c}) we use
Minty's device. Set 
\[
\mu=\frac{1}{\lambda}.
\]
Let us rewrite inclusion (\ref{2-a}) by using the following equivalences:
for any $t\in\left[0,+\infty\right[$ 
\begin{eqnarray}
 &  & \upsilon\left(t\right)\in\partial\Phi\left(x\left(t\right)\right)\Leftrightarrow\label{3}\\
 &  & x\left(t\right)+\mu\upsilon\left(t\right)\in x\left(t\right)+\mu\partial\Phi\left(x\left(t\right)\right)\label{4}\\
 &  & x\left(t\right)=\left(I+\mu\partial\Phi\right)^{-1}\left(x\left(t\right)+\mu\upsilon\left(t\right)\right).\label{5}
\end{eqnarray}
Let us introduce the new unknown function $y:\left[0,+\infty\right[\longrightarrow\mathcal{H}$
which is defined for $t\in\left[0,+\infty\right[$ by 
\begin{equation}
y\left(t\right):=x\left(t\right)+\mu\upsilon\left(t\right),\label{6}
\end{equation}
and rewrite the system (\ref{2-a})\textendash (\ref{2-c}) with the
help of $\left(x,y\right)$. From (\ref{5}) and (\ref{6}) 
\begin{eqnarray*}
 &  & x\left(t\right)=\left(I+\mu\partial\Phi\right)^{-1}\left(y\left(t\right)\right),\\
 &  & \upsilon\left(t\right)=\frac{1}{\mu}\left(y\left(t\right)-\left(I+\mu\partial\Phi\right)^{-1}\left(y\left(t\right)\right)\right).
\end{eqnarray*}
Equivalently, 
\begin{eqnarray}
 &  & x\left(t\right)=\textrm{prox}_{\mu\Phi}\left(y\left(t\right)\right);\label{7}\\
 &  & \upsilon\left(t\right)=\nabla\Phi_{\mu}\left(y\left(t\right)\right),\label{8}
\end{eqnarray}
where $\textrm{prox}_{\mu\Phi}$ is the proximal mapping associated
to $\mu\Phi$. Recall that $\textrm{prox}_{\mu\Phi}=\left(I+\mu\partial\Phi\right)^{-1}$
is the resolvent of index $\mu>0$ of the maximal monotone operator
$\partial\Phi$, and $\nabla\Phi_{\mu}$ is its Yosida approximation of
index $\mu>0$.

Let us show how (\ref{2-b}) can be reformulated as a classical differential
equation with respect to $y\left(\cdot\right)$. First, let us rewrite
(\ref{2-b}) as 
\begin{equation}
\dot{x}\left(t\right)+\mu\dot{\upsilon}\left(t\right)+\mu\upsilon\left(t\right)+\mu\varepsilon\left(t\right)x\left(t\right)=0.\label{9}
\end{equation}
Differentiating (\ref{6}), and using (\ref{9}) we obtain 
\begin{eqnarray}
\dot{y}\left(t\right) & = & \dot{x}\left(t\right)+\mu\dot{\upsilon}\left(t\right)\label{10}\\
 & = & -\mu\upsilon\left(t\right)-\mu\varepsilon\left(t\right)x\left(t\right).\label{11}
\end{eqnarray}
From (\ref{7}), (\ref{8}), and (\ref{11}) we deduce that 
\[
\dot{y}\left(t\right)+\mu\nabla\Phi_{\mu}\left(y\left(t\right)\right)+\mu\epsilon\left(t\right)\textrm{prox}_{\mu\Phi}\left(y\left(t\right)\right)=0.
\]
Finally, the $\left(x,y\right)$ system can be written as \begin{subequations}
\begin{eqnarray}
 &  & x\left(t\right)=\textrm{prox}_{\mu\Phi}\left(y\left(t\right)\right)\label{12-a}\\
 &  & \dot{y}\left(t\right)+\mu\nabla\Phi_{\mu}\left(y\left(t\right)\right)+\mu\epsilon\left(t\right)\textrm{prox}_{\mu\Phi}\left(y\left(t\right)\right)=0.\label{12-b}
\end{eqnarray}
\end{subequations} Conversely, if $y\left(\cdot\right)$ is a solution
of (\ref{12-b}), then $\left(x\left(\cdot\right),\upsilon\left(\cdot\right)\right)$
with $x\left(t\right)=\textrm{prox}_{\mu\Phi}\left(y\left(t\right)\right)$
, $\upsilon\left(t\right)=\nabla\Phi_{\mu}\left(y\left(t\right)\right)$
is a solution of (\ref{2-a})-(\ref{2-c}). Let us stress the fact
that the operators $\textrm{prox}_{\mu\Phi}:\mathcal{\mathcal{H}\longrightarrow H}$,
$\nabla\Phi_{\mu}:\mathcal{\mathcal{H}\longrightarrow H}$ are everywhere
defined and Lipschitz continuous, which makes this system relevant
to the Cauchy\textendash Lipschitz theorem.

\subsection{Global existence and uniqueness results}

Let us state our main result of existence and uniqueness for the system
(\ref{2-a})\textendash (\ref{2-c}). \begin{thm} \label{Theorem 2.1.}
Suppose that $\Phi:\mathcal{H}\longrightarrow\mathbb{R}\cup\left\{ +\infty\right\} $
is a convex lower semicontinuous proper function, and $\lambda>0$
is a positive constant. Let $\epsilon:\mathbb{R}_{+}\longrightarrow\mathbb{R}_{+}$
be a nonnegative locally integrable function, and $\left(x_{0},\upsilon_{0}\right)\in\mathcal{\mathcal{H}\times H}$
be such that $\upsilon_{0}\in\partial\Phi\left(x_{0}\right)$. Then
the following properties hold:

i) there exists a unique strong global solution $\left(x\left(\cdot\right),\upsilon\left(\cdot\right)\right):\left[0,+\infty\right[\longrightarrow\mathcal{\mathcal{H}\times H}$
of the Cauchy problem (\ref{2-a})-(\ref{2-c});

ii) the solution pair $\left(x\left(\cdot\right),\upsilon\left(\cdot\right)\right)$
of (\ref{2-a})-(\ref{2-c}) can be represented as follows: for any
$t\in\left[0,+\infty\right[$, 
\begin{eqnarray}
 &  & x\left(t\right)=\textrm{prox}_{\mu\Phi}\left(y\left(t\right)\right);\label{13}\\
 &  & \upsilon\left(t\right)=\nabla\Phi_{\mu}\left(y\left(t\right)\right),\label{14}
\end{eqnarray}
where $y\left(\cdot\right):\left[0,+\infty\right[\longrightarrow\mathcal{H}$
is the unique strong global solution of the Cauchy problem \begin{subequations}
{ 
\begin{eqnarray}
 &  & \dot{y}\left(t\right)+\mu\nabla\Phi_{\mu}\left(y\left(t\right)\right)+\mu\epsilon\left(t\right)\textrm{prox}_{\mu\Phi}\left(y\left(t\right)\right)=0,\label{15-a}\\
 &  & y\left(0\right)=x_{0}+\mu\upsilon_{0}.\label{15-b}
\end{eqnarray}
} 
\end{subequations}
\end{thm}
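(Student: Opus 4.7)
The plan is to leverage the Minty reformulation already derived in Section~2.2: the system (\ref{2-a})--(\ref{2-c}) is equivalent, via the change of unknown $y=x+\mu\upsilon$, to the scalar Cauchy problem (\ref{15-a})--(\ref{15-b}) for $y(\cdot)$, together with the recovery formulas (\ref{13})--(\ref{14}). The central feature that unlocks a classical approach is that the reformulated dynamic involves only the operators $\textrm{prox}_{\mu\Phi}$ and $\nabla\Phi_{\mu}$, which are everywhere defined on $\mathcal{H}$ and globally Lipschitz continuous (with constants $1$ and $1/\mu$ respectively). Thus although $\partial\Phi$ may be multivalued and unbounded, the transformed problem falls squarely within the scope of the Cauchy--Lipschitz theorem for Carath\'eodory ODEs.

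First, I would set $F(t,y)=-\mu\nabla\Phi_{\mu}(y)-\mu\epsilon(t)\textrm{prox}_{\mu\Phi}(y)$ and check its Carath\'eodory regularity: for each fixed $y$, the map $t\mapsto F(t,y)$ is measurable because $\epsilon$ is; for almost every $t$, $F(t,\cdot)$ is Lipschitz with constant $1+\mu\epsilon(t)$, which is locally integrable by hypothesis. The standard Picard--Lindel\"of iteration in the Carath\'eodory setting then yields a unique absolutely continuous local solution $y(\cdot)$ on some maximal interval $[0,T_{\max})$ with $y(0)=x_{0}+\mu\upsilon_{0}$.

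The main obstacle will be to rule out finite-time blow-up, i.e.\ to show $T_{\max}=+\infty$. Using the nonexpansiveness of $\textrm{prox}_{\mu\Phi}$ and the $(1/\mu)$-Lipschitz continuity of $\nabla\Phi_{\mu}$, one obtains the pointwise bound
\begin{equation*}
\|\dot{y}(t)\|\le \|y(t)\|+\mu\|\nabla\Phi_{\mu}(0)\|+\mu\epsilon(t)\bigl(\|y(t)\|+\|\textrm{prox}_{\mu\Phi}(0)\|\bigr).
\end{equation*}
Integrating and applying Gr\"onwall's lemma with the locally integrable weight $1+\mu\epsilon(s)$ shows that $\|y(\cdot)\|$ stays bounded on every finite interval, which forces $T_{\max}=+\infty$. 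This is the only genuinely non-algebraic step of the argument; the rest is bookkeeping about the Minty transformation.

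Finally, I would recover $(x,\upsilon)$ by the formulas $x(t)=\textrm{prox}_{\mu\Phi}(y(t))$ and $\upsilon(t)=\nabla\Phi_{\mu}(y(t))$, and verify the four conditions defining a strong solution. Property (ii) is the Minty graph identity: for every $y\in\mathcal{H}$, one has $y=\textrm{prox}_{\mu\Phi}(y)+\mu\nabla\Phi_{\mu}(y)$ together with $\nabla\Phi_{\mu}(y)\in\partial\Phi(\textrm{prox}_{\mu\Phi}(y))$. Property (i), local absolute continuity of $x$ and $\upsilon$, follows from composing the absolutely continuous $y(\cdot)$ with Lipschitz maps. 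Property (iii) is obtained by reversing the computation (\ref{9})--(\ref{11}) that led to (\ref{15-a}). Condition (iv) is ensured by the choice $y(0)=x_{0}+\mu\upsilon_{0}$: since $\upsilon_{0}\in\partial\Phi(x_{0})$, the Minty characterization gives $\textrm{prox}_{\mu\Phi}(y(0))=x_{0}$ and $\nabla\Phi_{\mu}(y(0))=\upsilon_{0}$. Uniqueness for $(x,\upsilon)$ is inherited from uniqueness of $y(\cdot)$, and conversely any strong solution of the original system yields $y:=x+\mu\upsilon$ solving (\ref{15-a})--(\ref{15-b}), so the correspondence is bijective and the theorem follows.
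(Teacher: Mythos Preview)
Your proposal is correct and follows essentially the same route as the paper: both reduce to the $y$-equation via the Minty transform, verify that $F(t,\cdot)$ is Lipschitz with locally integrable constant $1+\mu\epsilon(t)$, apply the Cauchy--Lipschitz theorem, and then recover $(x,\upsilon)$ from $y$ and check the four strong-solution conditions together with the bijective correspondence for uniqueness. The only cosmetic difference is that the paper invokes a global nonautonomous Cauchy--Lipschitz theorem directly (citing Haraux and Sontag), while you obtain a local solution first and then extend to $[0,+\infty)$ via the explicit Gr\"onwall bound; this simply unpacks what the cited theorem already contains.
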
 
\begin{proof} 
Let us first prove the existence and uniqueness of a strong global solution
of the Cauchy problem (\ref{15-a})-(\ref{15-b}). The Cauchy problem
(\ref{15-a})\textendash (\ref{15-b}) can be equivalently written
in abstract form, as the following non-autonomous differential system
\begin{equation}
\ \left\{ \begin{array}{l}
\dot{y}\left(t\right)=F\left(t,y\left(t\right)\right);\\
\rule{0pt}{12pt}y\left(0\right)=x_{0}+\mu\upsilon_{0},
\end{array}\right.\label{abstract-edo}
\end{equation}
with 
\begin{eqnarray}
F\left(t,y\right) & = & G(t,y)+K(t,y),\label{16}\\
G\left(t,y\right) & = & -\mu\nabla\Phi_{\mu}\left(y\right),\label{26-1}\\
K\left(t,y\right) & = & -\mu\epsilon\left(t\right)\textrm{prox}_{\mu\Phi}\left(y\right).\label{26-2}
\end{eqnarray}
In order to apply the Cauchy\textendash Lipschitz theorem to (\ref{abstract-edo}),
let us first examine the Lipschitz continuity properties of $F\left(t,\cdot\right)$.

(a) Take arbitrary $y_{i}\in\mathcal{H}$, $i=1,2$. The Yosida approximation
$\nabla\Phi_{\mu}$ is $\frac{1}{\mu}$-Lipschitz continuous (see
\cite{Br}), and hence, for any $t\geq0,$ $G\left(t,\cdot\right):\mathcal{\mathcal{H}\longrightarrow H}$
is nonexpansive, i.e., 
\begin{equation}
\left\Vert G\left(t,y_{2}\right)-G\left(t,y_{1}\right)\right\Vert \leq\left\Vert y_{2}-y_{1}\right\Vert .\label{17}
\end{equation}
By the nonexpansive property of the resolvent operators we have 
\begin{equation}
\left\Vert K\left(t,y_{2}\right)-K\left(t,y_{1}\right)\right\Vert \leq\mu\epsilon\left(t\right)\left\Vert y_{2}-y_{1}\right\Vert .\label{18}
\end{equation}
Hence, 
\begin{equation}
\left\Vert F\left(t,y_{2}\right)-F\left(t,y_{1}\right)\right\Vert \leq(1+\mu\epsilon\left(t\right))\left\Vert y_{2}-y_{1}\right\Vert .\label{19}
\end{equation}

Since $\varepsilon\left(\cdot\right)$ is nonegative and locally integrable,
(\ref{19}) shows that the Lipschitz constant $L_{F}(t)=(1+\mu\varepsilon\left(t\right))$
of $F(t,\cdot)$ satisfies 
\begin{equation}
L_{F}\left(\cdot\right)\in L^{1}\left(\left[0,b\right]\right)\:\textrm{ for any }0<b<+\infty.\label{19-1}
\end{equation}

(b) Let us show that 
\begin{equation}
\forall y\in\mathcal{H},\:\forall b>0,\;F\left(\cdot,y\right)\in L^{1}\left(\left[0,b\right];\mathcal{H}\right).\label{20}
\end{equation}
Returning to the definition (\ref{16}) of $F$, we deduce that 
\[
\left\Vert F\left(t,y\right)\right\Vert \leq\mu\left\Vert \nabla\Phi_{\mu}\left(y\right)\right\Vert +\mu\epsilon\left(t\right)\left\Vert \textrm{prox}_{\mu\Phi}\left(y\right)\right\Vert .
\]
By assumption, $\epsilon:\mathbb{R}_{+}\longrightarrow\mathbb{R}_{+}$
is a nonnegative locally integrable function, which gives (\ref{20}).
From (\ref{19}) and (\ref{20}), by Cauchy-Lipschitz theorem (see
\cite{Ha}, \cite{Son} for the nonautonomous version used here),
we deduce the existence and uniqueness of a strong global solution
of the Cauchy problem (\ref{abstract-edo}), and hence of (\ref{15-a})-(\ref{15-b}).

\smallskip{}

(2) Let us return to the initial problem (\ref{2-a})-(\ref{2-c}).
Given $y\left(\cdot\right):\left[0,+\infty\right[\longrightarrow\mathcal{H}$
which is the unique strong solution of Cauchy problem (\ref{15-a})-(\ref{15-b}),
let us define $x\left(\cdot\right),\upsilon\left(\cdot\right):\left[0,+\infty\right[\longrightarrow\mathcal{H}$
by 
\begin{equation}
x\left(t\right)=\textrm{prox}_{\mu\Phi}\left(y\left(t\right)\right),\quad\quad\upsilon\left(t\right)=\nabla\Phi_{\mu}\left(y\left(t\right)\right).\label{21}
\end{equation}
(a) Let us show that $x\left(\cdot\right),\upsilon\left(\cdot\right)$
are absolutely continuous on each bounded interval, and satisfy (\ref{2-a})-(\ref{2-c}).
Let us give arbitrary $y_{1}\in\mathcal{H}$, $y_{2}\in\mathcal{H}$.
By the nonexpansive property of the resolvents, we have 
\begin{equation}
\left\Vert \textrm{prox}_{\mu\Phi}\left(y_{2}\right)-\textrm{prox}_{\mu\Phi}\left(y_{1}\right)\right\Vert \leq\left\Vert y_{2}-y_{1}\right\Vert .\label{22}
\end{equation}
Assuming that $s,t\in\left[0,b\right],$ by taking $y_{1}=y\left(s\right)$,
$y_{2}=y(t)$ in (\ref{22}), and owing to the definition of the absolute
continuity property, we deduce that $x\left(t\right)=\textrm{prox}_{\mu\Phi}\left(y\left(t\right)\right)$
is absolutely continuous on $\left[0,b\right]$ for any $b>0$. As
a linear combination of two absolutely continuous functions, the same
property holds true for $\upsilon\left(t\right)=\lambda\left(y\left(t\right)-x\left(t\right)\right).$
\\
 Moreover, for any $t\in\left[0,+\infty\right]$ 
\[
\upsilon\left(t\right)\in\partial\Phi\left(x\left(t\right)\right),\quad\quad y\left(t\right)=x\left(t\right)+\mu\upsilon\left(t\right).
\]
Differentiation of the above equation shows that, for almost every
$t>0$, 
\[
\dot{x}\left(t\right)+\mu\dot{\upsilon}\left(t\right)=\dot{y}\left(t\right).
\]
On the other hand, owing to $\upsilon\left(t\right)=\nabla\Phi_{\mu}\left(y\left(t\right)\right)$,
$x\left(t\right)=\textrm{prox}_{\mu\Phi}\left(y\left(t\right)\right)$,
(\ref{15-a}) can be equivalently written as 
\[
\dot{y}\left(t\right)+\mu\upsilon\left(t\right)+\mu\epsilon\left(t\right)x\left(t\right)=0.
\]
Combining the two above equations, we obtain 
\[
\dot{x}\left(t\right)+\mu\dot{\upsilon}\left(t\right)+\mu\upsilon\left(t\right)+\mu\epsilon\left(t\right)x\left(t\right)=0.
\]
From $\mu=\frac{1}{\lambda}$, we conclude that $\left(x\left(\cdot\right),\upsilon\left(\cdot\right)\right)$
is a solution of system (\ref{2-a})-(\ref{2-b}).\\
 Regarding the initial condition, we observe that 
\begin{eqnarray}
y\left(0\right) & = & x_{0}+\mu\upsilon_{0}\label{23}\\
 & = & x\left(0\right)+\mu\upsilon\left(0\right),\label{24}
\end{eqnarray}
with $\upsilon_{0}\in\partial\Phi\left(x_{0}\right)$ and $\upsilon\left(0\right)\in\partial\Phi\left(x\left(0\right)\right)$.
Hence 
\[
x\left(0\right)=x_{0}=\left(I+\mu\partial\Phi\right)^{-1}\left(x_{0}+\mu\upsilon_{0}\right).
\]
After simplification, we obtain $\upsilon\left(0\right)=\upsilon_{0}$.

(b) Let us now prove the uniqueness. Suppose that 
\[
x\left(\cdot\right),\upsilon\left(\cdot\right):\left[0,+\infty\right[\longrightarrow\mathcal{H}
\]
is a solution pair of (\ref{2-a})-(\ref{2-c}). Defining $\mu=\frac{1}{\lambda}$
and 
\begin{equation}
y\left(t\right)=x\left(t\right)+\mu\upsilon\left(t\right)\label{25}
\end{equation}
we conclude that $y\left(\cdot\right)$ is absolutely continuous,
$y_{0}=x_{0}+\mu\upsilon_{0}$, and for any $t\in\left[0,+\infty\right[$
\begin{equation}
x\left(t\right)=\left(I+\mu\partial\Phi\right)^{-1}\left(y\left(t\right)\right),\quad\upsilon\left(t\right)=\nabla\Phi_{\mu}\left(y\left(t\right)\right).\label{26}
\end{equation}
Since the functions involved in the definition (\ref{25}) of $y\left(\cdot\right)$,
namely $x\left(\cdot\right)$ and $\upsilon\left(\cdot\right)$, are
differentiable for almost all $t\in\left[0,+\infty\right[$, we have
for almost $t\in\left[0,+\infty\right[$ 
\begin{eqnarray*}
\dot{y}\left(t\right) & = & \dot{x}\left(t\right)+\mu\dot{\upsilon}\left(t\right)\\
 & = & -\mu\left(\dot{\upsilon}\left(t\right)+\upsilon\left(t\right)+\epsilon\left(t\right)x\left(t\right)\right)+\mu\dot{\upsilon}\left(t\right).
\end{eqnarray*}
Since $\upsilon\left(t\right)=\nabla\Phi_{\mu}\left(y\left(t\right)\right)$,
we finally obtain 
\[
\dot{y}\left(t\right)+\mu\nabla\Phi_{\mu}\left(y\left(t\right)\right)+\mu\epsilon\left(t\right)\textrm{prox}_{\mu\Phi}\left(y\left(t\right)\right)=0.
\]
Moreover 
\[
y_{0}=x_{0}+\mu\upsilon_{0}.
\]
Arguing as before, by the Cauchy\textendash Lipschitz theorem, the
solution $y\left(\cdot\right)$ of the above system is uniquely determined,
and locally absolutely continuous. Thus, by (\ref{26}), $x\left(\cdot\right)$
and $\upsilon\left(\cdot\right)$ are uniquely determined. \end{proof}

\section{Asymptotic analysis and convergence properties}

In this section, we study the asymptotic behavior, as $t\rightarrow+\infty$,
of the trajectories of system (\ref{2-a})-(\ref{2-b}). Let us recall
our standing assumption, namely the parametrization $\epsilon\left(\cdot\right)$
is supposed to be nonnegative, and locally integrable. In view of
the asymptotic analysis, we also suppose that $\epsilon\left(t\right)\rightarrow0$
as $t\rightarrow\infty$, and satisfies the \textquotedbl{}slow\textquotedbl{}
decay property 
\[
\int_{0}^{+\infty}\varepsilon\left(t\right)dt=+\infty.
\]
By Theorem \ref{Theorem 2.1.}, for any given Cauchy data $\upsilon_{0}\in\partial\Phi\left(x_{0}\right)$,
the above properties guarantee the existence and uniqueness of a global
solution of system (\ref{2-a})-(\ref{2-b})-(\ref{2-c}). From now
on in this section, $\left(x\left(\cdot\right),\upsilon\left(\cdot\right)\right):\left[0,+\infty\right[\longrightarrow\mathcal{\mathcal{H}\times H}$
is the solution of (\ref{2-a})-(\ref{2-b})-(\ref{2-c}). We first
study the asymptotic behavior, as $t\rightarrow+\infty$, of the trajectories
of the associated system (\ref{15-a}) 
\[
\dot{y}\left(t\right)+\mu\nabla\Phi_{\mu}\left(y\left(t\right)\right)+\mu\epsilon\left(t\right)\textrm{prox}_{\mu\Phi}\left(y\left(t\right)\right)=0
\]
whose existence is guaranteed by Theorem \ref{Theorem 2.1.}. The
central point of our analysis is to reformulate this system as a multi-scale
gradient system, which will allow us to use the known results concerning
the asymptotic behavior, and the hierarchical selection property for
such systems.

\subsection{Preliminary results}

Let us state some definitions and classical properties that will be
useful (see \cite{ABM}, \cite{BaCom}, \cite{Br}, \cite{RW} for
an extended presentation of these notions): \begin{defn} \label{Definition 3.1.}Let
$f$ and $g$ be functions from $\mathcal{H}$ to $\mathbb{R}\cup\left\{ +\infty\right\} $.
The \emph{infimal convolution }(or \emph{epi-sum}) of $f$ and $g$
is the function $f\square g:\mathcal{H}\to\left[-\infty,+\infty\right]$
which is defined by 
\[
f\square g(x)=\inf_{\xi\in\mathcal{H}}\left(f\left(\xi\right)+g\left(x-\xi\right)\right).
\]
\end{defn}

\begin{defn} \label{Definition 3.2.} Let $f:\mathcal{H}\rightarrow\mathbb{R}\cup\left\{ +\infty\right\} $,
$\gamma\in\mathbb{R}_{++}$. The Moreau envelope of $f$ of parameter
$\gamma$ is defined by 
\[
f_{\gamma}=f\square\left(\frac{1}{2\gamma}\left\Vert \cdot\right\Vert ^{2}\right).
\]
\end{defn}

\begin{defn} \label{Definition 3.3.} Let $f:\mathcal{H}\rightarrow\mathbb{R}\cup\left\{ +\infty\right\} $
be a convex lower semicontinuous proper function, and let $x\in\mathcal{H}$.
Then $\textrm{prox}_{f}x$ is the unique point in $\mathcal{H}$ that
satisfies 
\[
f_{1}(x)=\min_{\xi\in\mathcal{H}}\left(f\left(\xi\right)+\frac{1}{2}\left\Vert x-\xi\right\Vert ^{2}\right)=f\left(\textrm{prox}_{f}x\right)+\frac{1}{2}\left\Vert x-\textrm{prox}_{f}x\right\Vert ^{2}.
\]
The operator $\textrm{prox}_{f}:\mathcal{H}\rightarrow\mathcal{H}$
is called the proximity operator, or proximal mapping of $f$. \end{defn}

\begin{defn} \label{Definition 3.4.} Let $f:\mathcal{H}\rightarrow\mathbb{R}\cup\left\{ +\infty\right\} $.
The conjugate (or Legendre-Fenchel transform, or Fenchel conjugate)
of $f$ is 
\[
f^{*}:\mathcal{H}\rightarrow\mathbb{R}\cup\left\{ +\infty\right\} :u\mapsto\sup_{x\in\mathcal{H}}\left(\left\langle x,u\right\rangle -f\left(x\right)\right).
\]
\end{defn} \begin{rem} \label{Remark 3.2-1} Let $f:\mathcal{H}\rightarrow\mathbb{R}\cup\left\{ +\infty\right\} $
be proper then 
\[
f^{*}\left(0\right)=-\inf_{\mathcal{H}}f.
\]
$f$ is lower semi continuous and convex if and only if 
\[
f=f^{**}.
\]
\end{rem} \begin{rem} \label{Remark 3.2} Let $f$ and $g$ be functions
from $\mathcal{H}$ to $\mathbb{R}\cup\left\{ +\infty\right\} $.
Then 
\[
\left(f\square g\right)^{*}=f^{*}+g^{*}.
\]
Conversely, if one of the functions ($f$ or $g$) is continuous at
a point of the domain of the other, then 
\[
(f+g)^{*}=f^{*}\square g^{*}.
\]
\end{rem} \begin{rem} \label{Remark 3.3.} a) Let $\varphi:\mathcal{H}\rightarrow\mathbb{R}\cup\left\{ +\infty\right\} $
be proper, and $\gamma\in\mathbb{R}_{++}$. Set $f=\varphi+\frac{1}{2\gamma}\left\Vert \cdot\right\Vert ^{2}$.
Then $\forall u\in\mathcal{H}$ 
\[
f^{*}\left(u\right)=\frac{\gamma}{2}\left\Vert u\right\Vert ^{2}-\varphi_{\gamma}\left(\gamma u\right).
\]
b) Let $C$ be a nonempty subset of $\mathcal{H}$, and let $f=\delta_{C}+\left\Vert \cdot\right\Vert ^{2}/2,$
where $\delta_{C}$ is the indicator function of the set $C$ ($\delta_{C}\left(x\right)=0$
for $x\in\mathcal{H},$ $+\infty$ outwards). Then 
\[
f^{*}=\left(\left\Vert \cdot\right\Vert ^{2}-d_{C}^{2}\right)/2,
\]
where $d_{c}$ is the distance function to the set $C$. For the proof,
set $\varphi=\delta_{C}$ and $\gamma=1$ in Remark \ref{Remark 3.3.}.
\end{rem} 
In the next lemma, we show that the proximal mapping can be written as
the gradient of a convex differentiable function. This result will
play a crucial role in our analysis. \begin{lem} \label{Lemme 3.1}
Let $\Phi:\mathcal{H}\rightarrow\mathbb{R}\cup\left\{ +\infty\right\} $
be a proper convex lower semicontinuous function, and let $\mu>0$.
Then, the proximal mapping $\textrm{prox}_{\mu\Phi}:\mathcal{H}\rightarrow\mathcal{H}$
can be written as the gradient 
\[
\textrm{\ensuremath{\textrm{prox}_{\mu\Phi}}}=\nabla\psi
\]
of the convex continuously differentiable function $\psi:\mathcal{H}\to\mathbb{R}$
which is defined, for any $y\in\mathcal{H}$, by 
\[
\psi(y)=\mu\left(\Phi^{*}\right)_{\frac{1}{\mu}}\left(\frac{1}{\mu}y\right)
\]
where $\Phi^{*}$is the Fenchel conjugate of $\Phi$. \end{lem}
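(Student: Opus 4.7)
The plan is to identify $\psi$ as essentially a rescaled Moreau envelope of $\Phi^{*}$ and then differentiate, using the standard gradient formula for Moreau envelopes together with Moreau's decomposition identity to convert the resulting $\mathrm{prox}$ on $\Phi^{*}$ into $\mathrm{prox}_{\mu\Phi}$.

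First I will record that, since $\Phi$ is proper convex lower semicontinuous, so is $\Phi^{*}$, hence its Moreau envelope $(\Phi^{*})_{1/\mu}:\mathcal H\to\mathbb R$ is everywhere finite, convex, and continuously Fréchet-differentiable with $\mu$-Lipschitz gradient. Consequently $\psi(y)=\mu\,(\Phi^{*})_{1/\mu}(y/\mu)$ is well-defined, convex, and of class $C^{1}(\mathcal H)$. Applying the chain rule, the $\mu$ prefactor and the $1/\mu$ inside cancel, giving
\[
\nabla\psi(y)=\nabla(\Phi^{*})_{1/\mu}\!\left(\tfrac{y}{\mu}\right).
\]

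Next I will invoke the classical identity for the gradient of a Moreau envelope, namely $\nabla f_{\gamma}(z)=\tfrac{1}{\gamma}\bigl(z-\mathrm{prox}_{\gamma f}(z)\bigr)$, applied with $f=\Phi^{*}$ and $\gamma=1/\mu$, so that
\[
\nabla(\Phi^{*})_{1/\mu}(z)=\mu\bigl(z-\mathrm{prox}_{(1/\mu)\Phi^{*}}(z)\bigr).
\]
Evaluating at $z=y/\mu$ yields $\nabla\psi(y)=y-\mu\,\mathrm{prox}_{(1/\mu)\Phi^{*}}(y/\mu)$.

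The last step is to recognize the Moreau decomposition identity for the maximal monotone operator $\partial\Phi$: for every $y\in\mathcal H$,
\[
y=\mathrm{prox}_{\mu\Phi}(y)+\mu\,\mathrm{prox}_{(1/\mu)\Phi^{*}}(y/\mu).
\]
Substituting this into the previous display gives $\nabla\psi(y)=\mathrm{prox}_{\mu\Phi}(y)$, which is the claim. I do not anticipate a genuine obstacle here: the only nontrivial ingredients are (i) the differentiability and gradient formula for the Moreau envelope and (ii) Moreau's decomposition identity, both of which are standard facts available in the references already cited in the paper (e.g.\ \cite{BaCom}, \cite{Br}); the rest is bookkeeping with the scaling factors $\mu$ and $1/\mu$.
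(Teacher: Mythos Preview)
Your argument is correct and uses the same two ingredients as the paper's proof: the gradient formula for the Moreau envelope and the Moreau decomposition identity $y=\mathrm{prox}_{\mu\Phi}(y)+\mu\,\mathrm{prox}_{(1/\mu)\Phi^{*}}(y/\mu)$. The only difference is organizational: the paper starts from $x=\mathrm{prox}_{\mu\Phi}(y)$ and derives the Moreau decomposition by hand via $(\partial\Phi)^{-1}=\partial\Phi^{*}$ before invoking the chain rule, whereas you start from $\psi$, differentiate, and then cite Moreau's decomposition as a known fact---same route traversed in opposite directions.
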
 \begin{proof}
For any $y\in\mathcal{H}$, set 
\[
x=\textrm{\ensuremath{\textrm{prox}_{\mu\Phi}}}\left(y\right).
\]
By definition of the proximal mapping, we have the following equivalent
formulations 
\begin{eqnarray*}
 &  & x=\left(I+\mu\partial\Phi\right)^{-1}\left(y\right)\\
 &  & y\in\left(I+\mu\partial\Phi\right)\left(x\right)\\
 &  & y-x\in\mu\partial\Phi\left(x\right)\\
 &  & \frac{1}{\mu}\left(y-x\right)\in\partial\Phi\left(x\right).
\end{eqnarray*}
From $\left(\partial\Phi\right)^{-1}=\partial\Phi^{*}$ and the above
equality, we successively obtain 
\begin{align*}
 & x\in\partial\Phi^{*}\left(\frac{1}{\mu}\left(y-x\right)\right);\\
 & \frac{1}{\mu}y\in\frac{1}{\mu}\left(y-x\right)+\frac{1}{\mu}\partial\Phi^{*}\left(\frac{1}{\mu}\left(y-x\right)\right)=\left(I+\frac{1}{\mu}\partial\Phi^{*}\right)\left(\frac{1}{\mu}\left(y-x\right)\right);\\
 & \frac{1}{\mu}\left(y-x\right)=\left(I+\frac{1}{\mu}\partial\Phi^{*}\right)^{-1}\left(\frac{1}{\mu}y\right);\\
 & x=\mu\left(\frac{1}{\mu}y-\left(I+\frac{1}{\mu}\partial\Phi^{*}\right)^{-1}\left(\frac{1}{\mu}y\right)\right);\\
 & x=\mu\left(I-\textrm{\ensuremath{\textrm{prox}_{\frac{1}{\mu}\Phi^{*}}}}\right)\left(\frac{1}{\mu}y\right).
\end{align*}
By the definition of the Yosida approximation of index $\frac{1}{\mu}>0$
of the maximal monotone operator $\partial\Phi^{*}$ 
\[
x=\nabla\left[\left(\Phi^{*}\right)_{\frac{1}{\mu}}\right]\left(\frac{1}{\mu}y\right).
\]
By the classical derivation chain rule, we deduce that the the proximal
mapping $\textrm{prox}_{\mu\Phi}y:\mathcal{H}\rightarrow\mathcal{H}$
is the gradient of the convex continuously differentiable function
$\psi:\mathcal{H}\to\mathbb{R}$ which is defined, for any $y\in\mathcal{H}$,
by 
\[
\psi(y)=\mu\left(\Phi^{*}\right)_{\frac{1}{\mu}}\left(\frac{1}{\mu}y\right).
\]
\end{proof} 
Let us further analyze the function $\psi$, and give equivalent formulations
which come with different proofs of the above lemma. By Definition
\ref{Definition 3.2.} of the Moreau envelope of $\Phi$ of parameter
$\mu$ 
\begin{equation}
\Phi_{\mu}(y)=\inf_{x\in\mathcal{H}}\left\{ \Phi\left(x\right)+\frac{1}{2\mu}\left\Vert y-x\right\Vert ^{2}\right\} ,\label{31}
\end{equation}
and since the Yosida approximation of the subdifferential of $\Phi$
is the Fréchet derivative of Moreau envelope, 
\[
\nabla\Phi_{\mu}\left(y\right)=\frac{1}{\mu}\left(y-J_{\mu}^{\partial\Phi}y\right)
\]
\[
\mu\nabla\Phi_{\mu}\left(y\right)=y-J_{\mu}^{\partial\Phi}y.
\]
By definition, we have $\textrm{\ensuremath{\textrm{prox}_{\mu\Phi}}}\left(y\right)=J_{\mu}^{\partial\Phi}y=\left(I+\mu\partial\Phi\right)^{-1}\left(y\right)$.
Hence 
\[
\mu\nabla\Phi_{\mu}\left(y\right)=y-\textrm{\ensuremath{\textrm{prox}_{\mu\Phi}}}\left(y\right).
\]
\begin{eqnarray*}
\textrm{\ensuremath{\textrm{prox}_{\mu\Phi}}}\left(y\right) & = & y-\mu\nabla\Phi_{\mu}\left(y\right)\\
 & = & \nabla\left(\frac{1}{2}\left\Vert \cdot\right\Vert ^{2}-\mu\Phi_{\mu}\right)\left(y\right).
\end{eqnarray*}
Let us make the link with the previous formulation of the prox as
a gradient, and show that, for any $y\in\mathcal{H}$ 
\[
\frac{1}{2}\left\Vert y\right\Vert ^{2}-\mu\Phi_{\mu}\left(y\right)=\mu\Phi_{\frac{1}{\mu}}^{*}\left(\frac{1}{\mu}y\right);
\]
By (\ref{31}), we have 
\begin{eqnarray*}
\frac{1}{2}\left\Vert y\right\Vert ^{2}-\mu\Phi_{\mu}\left(y\right) & = & \frac{1}{2}\left\Vert y\right\Vert ^{2}-\mu\inf_{x\in\mathcal{H}}\left\{ \Phi\left(x\right)+\frac{1}{2\mu}\left\Vert y-x\right\Vert ^{2}\right\} \\
 & = & \frac{1}{2}\left\Vert y\right\Vert ^{2}-\mu\inf_{x\in\mathcal{H}}\left\{ \Phi\left(x\right)+\frac{1}{2\mu}\left\Vert y\right\Vert ^{2}-\frac{1}{\mu}\left\langle y,x\right\rangle +\frac{1}{2\mu}\left\Vert x\right\Vert ^{2}\right\} \\
 & = & \sup_{x\in\mathcal{H}}\left\{ \left\langle y,x\right\rangle -\mu\left(\Phi\left(x\right)+\frac{1}{2\mu}\left\Vert x\right\Vert ^{2}\right)\right\} \\
 & = & \mu\sup_{x\in\mathcal{H}}\left\{ \left\langle \frac{1}{\mu}y,x\right\rangle -\left(\Phi\left(x\right)+\frac{1}{2\mu}\left\Vert x\right\Vert ^{2}\right)\right\} .
\end{eqnarray*}
By using Remark \ref{Remark 3.2} concerning the conjugate of a sum,
we obtain %
\begin{eqnarray*}
\frac{1}{2}\left\Vert y\right\Vert ^{2}-\mu\Phi_{\mu}\left(y\right) & = & \mu\left(\Phi+\frac{1}{2\mu}\left\Vert \cdot\right\Vert ^{2}\right)^{*}\left(\frac{1}{\mu}y\right)\\
 & = & \mu\left(\Phi^{*}\square\frac{\mu}{2}\left\Vert \cdot\right\Vert ^{2}\right)\left(\frac{1}{\mu}y\right)\\
 & = & \mu\left(\Phi^{*}\right)_{\frac{1}{\mu}}\left(\frac{1}{\mu}y\right).
\end{eqnarray*}
So we obtain the same function $\psi$ as given in Lemma \ref{Lemme 3.1}.
Note that by Remark \ref{Remark 3.3.}, the equivalent (dual) formulation
of $\psi$ given by $\psi(y)=\frac{1}{2}\left\Vert y\right\Vert ^{2}-\mu\Phi_{\mu}\left(y\right)$,
which is written as a d.c. function, is actually a convex function.

\subsection{Asymptotic hierarchical minimization}

Let us study the asymptotic behavior of the trajectories of system
(\ref{2-a})-(\ref{2-b}). We consider the equivalent system (\ref{15-a}),
which, by Lemma \ref{Lemme 3.1}, can be formulated as follows: 
\begin{eqnarray}
 &  & x\left(t\right)=\textrm{prox}_{\mu\Phi}\left(y\left(t\right)\right)\label{33}\\
 &  & \dot{y}\left(t\right)+\nabla\Theta\left(y\left(t\right)\right)+\epsilon\left(t\right)\nabla\Psi\left(y\left(t\right)\right)=0,\label{34}
\end{eqnarray}
where, for any $y\in\mathcal{H}$ 
\begin{eqnarray}
 &  & \Theta(y):=\mu\Phi_{\mu}(y);\label{33-a}\\
 &  & \Psi\left(y\right)=\mu^{2}\left(\Phi^{*}\right)_{\frac{1}{\mu}}\left(\frac{1}{\mu}y\right).\label{34-a}
\end{eqnarray}
Note that $\Theta$ and $\Psi$ are two convex continuously differentiable
functions. We are within the framework of the multiscale gradient
system ${\rm (MAG)_{\varepsilon}}$, with a positive control $t\mapsto\varepsilon(t)$
that converges to $0$ as $t\to\infty$, 
\[
{\rm (MAG)_{\varepsilon}}\qquad\dot{y}(t)+\partial\Theta(y(t))+\varepsilon(t)\partial\Psi(y(t))\ni0,
\]
that has been considered by Attouch-Czarnecki in \cite{AMO}. Let
us recall this general abstract result, that we formulate with notations
adapted to our setting. Since $\Theta$ enters ${\rm (MAG)_{\varepsilon}}$
only by its subdifferential, it is not a restrictive assumption to
assume this potential to be nonnegative, with its infimal value equal
to zero (substracting the infimal value does not affect the subdifferential).

\begin{thm}{(Attouch-Czarnecki, \cite{AMO})} \label{Theweak_eps}
Let 
\begin{itemize}
\item $\Theta:\mathcal{H}\rightarrow\mathbb{R}^{+}\cup\{+\infty\}$ be a
closed convex proper function, such that $C$ = $\mbox{{\rm argmin}}\Theta=\Theta^{-1}(0)\neq\emptyset.$

\smallskip{}

\item $\Psi:\mathcal{H}\rightarrow\mathbb{R}^{\phantom{+}}\cup\{+\infty\}$
be a closed convex proper function, such that $S=\mbox{{\rm argmin}}\{\Psi|\mbox{{\rm argmin}}\Theta\}\neq\emptyset.$ 
\end{itemize}
Let us assume that, 
\begin{itemize}
\item ${(\mathcal{H}_{1})}_{\varepsilon}$%
\mbox{%
}%
\mbox{%
} $\forall p\in R(N_{C}),$ ${\displaystyle \int_{0}^{+\infty}\Theta^{*}(\varepsilon(t)p)-\sigma_{C}(\varepsilon(t)p)dt<+\infty.}$ 
\item ${(\mathcal{H}_{2})}_{\varepsilon}$%
\mbox{%
}%
\mbox{%
} $\varepsilon(\cdot)$ is a non increasing function of class $C^{1}$,
such that $\lim_{t\to+\infty}\varepsilon(t)=0$, ${\displaystyle \int_{0}^{+\infty}\varepsilon(t)dt=+\infty}$,
and for some $k\geq0$, $-k\varepsilon^{2}\leq\dot{\varepsilon}$. 
\end{itemize}
Let $y(\cdot)$ be a strong solution of ${\rm (MAG)_{\varepsilon}}$.
Then: 
\begin{eqnarray*}
(i) & \mbox{ weak convergence } & \exists y_{\infty}\in S=\mbox{{\rm {\rm argmin}}}\{\Psi|\mbox{{\rm argmin}}\Theta\},\qquad w-\lim_{t\to+\infty}y(t)=y_{\infty};\\
(ii) & \mbox{ minimizing properties } & \lim_{t\to+\infty}\Theta(y(t))=0;\\
 &  & \lim_{t\to+\infty}\Psi(y(t))=\min\Psi|_{\mbox{{\rm argmin}}{\Theta}};\\
(iii) &  & \forall z\in S\lim_{t\to+\infty}\|y(t)-z\|\mbox{ exists };\\
(iv) & \mbox{ estimations } & \lim_{t\to+\infty}\frac{1}{\varepsilon(t)}\Theta(y(t))=0;\\
 &  & \int_{0}^{+\infty}\Theta(y(t))dt<+\infty;\\
 &  & \limsup_{\tau\to+\infty}\int_{0}^{\tau}\varepsilon(t)\left(\Psi(y(t))-\min\Psi|_{\mbox{{\rm argmin}}\Theta}\right)dt<+\infty.
\end{eqnarray*}
\end{thm}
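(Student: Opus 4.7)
I would follow a Lyapunov analysis anchored at points of $S$, combined with Opial's lemma for weak convergence. Fix an arbitrary $z\in S$ and set $h_z(t)=\frac{1}{2}\|y(t)-z\|^2$. Differentiating and using the inclusion $-\dot{y}(t)\in\partial\Theta(y(t))+\varepsilon(t)\partial\Psi(y(t))$, the subgradient inequalities for the convex functions $\Theta$ and $\Psi$ yield
\[
\dot{h}_z(t)+\bigl(\Theta(y(t))-\Theta(z)\bigr)+\varepsilon(t)\bigl(\Psi(y(t))-\Psi(z)\bigr)\le 0.
\]
Because $z\in S\subset\mathrm{argmin}\,\Theta=\Theta^{-1}(0)$, the term $\Theta(z)$ vanishes and $\Theta(y(t))$ enters with a favorable sign; the real issue is to control the indefinite-sign term $\varepsilon(t)(\Psi(y(t))-\Psi(z))$.

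To that end, I would exploit the optimality of $z$: there exists $p\in\partial\Psi(z)$ with $-p\in N_C(z)$, so $p\in R(N_C)$. Writing the Fenchel--Young inequality for $\Theta$ at the point $\varepsilon(t)p$, and using that $\sigma_C(\varepsilon(t)p)=\langle\varepsilon(t)p,z\rangle$ since $-p\in N_C(z)$, one gets
\[
\langle\varepsilon(t)p,y(t)-z\rangle\le \Theta(y(t))+\Theta^*(\varepsilon(t)p)-\sigma_C(\varepsilon(t)p).
\]
Combined with the subgradient inequality $\Psi(y(t))-\Psi(z)\ge\langle p,y(t)-z\rangle$, this absorbs the troublesome term into an integrable remainder thanks to hypothesis $(\mathcal{H}_1)_{\varepsilon}$. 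Integrating the resulting differential inequality on $[0,\tau]$ produces (a) boundedness of $h_z(t)$, hence boundedness of the trajectory, (b) the integral estimate $\int_0^{+\infty}\Theta(y(t))\,dt<+\infty$, and (c) the final bound in (iv) for $\int_0^\tau\varepsilon(t)(\Psi(y(t))-\min\Psi|_{\mathrm{argmin}\,\Theta})\,dt$.

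Next, to promote these integral estimates to pointwise limits, I would use hypothesis $(\mathcal{H}_2)_{\varepsilon}$: the monotonicity of $\varepsilon$ and the differential control $\dot\varepsilon\ge-k\varepsilon^2$ prevent $\Theta(y(t))/\varepsilon(t)$ from oscillating wildly, so the integrability just obtained forces $\lim_{t\to+\infty}\Theta(y(t))/\varepsilon(t)=0$, and in particular $\Theta(y(t))\to 0$. Feeding this back into the energy inequality, the slow-decay property $\int_0^{+\infty}\varepsilon(t)\,dt=+\infty$ yields, by a liminf/limsup argument, the convergence $\Psi(y(t))\to \min\Psi|_{\mathrm{argmin}\,\Theta}$. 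From the same inequality, $\lim_{t\to+\infty}\|y(t)-z\|$ exists for every $z\in S$, which is (iii).

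Finally, I would conclude weak convergence by Opial's lemma: (iii) is the first Opial hypothesis; for the second, if $\bar y$ is any weak cluster point of $y(\cdot)$, then weak lower semicontinuity combined with (ii) gives $\Theta(\bar y)\le\liminf\Theta(y(t_n))=0$ and $\Psi(\bar y)\le\min\Psi|_{\mathrm{argmin}\,\Theta}$, so $\bar y\in S$. The main obstacle is the hierarchical selection step: passing from mere convergence to $\mathrm{argmin}\,\Theta$ to the stronger conclusion that cluster points minimize $\Psi$ on $\mathrm{argmin}\,\Theta$. This is exactly the content of assumption $(\mathcal{H}_1)_{\varepsilon}$ combined with the slow-control property, which together ensure that the vanishing term $\varepsilon(t)\partial\Psi$ remains effective long enough to select within $\mathrm{argmin}\,\Theta$.
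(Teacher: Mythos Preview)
The paper does not prove this theorem at all: it is quoted verbatim from the cited reference \cite{AMO} (Attouch--Czarnecki) and used as a black-box tool in the proof of Theorem~\ref{Theorem 3.2.}. There is therefore no ``paper's own proof'' to compare your proposal against.

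That said, your outline is essentially the strategy used in \cite{AMO} itself: an anchored Lyapunov function $h_z(t)=\frac12\|y(t)-z\|^2$ for $z\in S$, the Fenchel--Young inequality applied at the dual point $\varepsilon(t)p$ coming from the optimality condition $0\in\partial\Psi(z)+N_C(z)$, integration to get the estimates in (iv), and Opial's lemma for (i). Two small remarks. First, a sign slip: from $-p\in N_C(z)$ you get $-p\in R(N_C)$, not $p\in R(N_C)$; the hypothesis $(\mathcal{H}_1)_\varepsilon$ is then applied to $-p$, and correspondingly $\langle-\varepsilon(t)p,z\rangle=\sigma_C(-\varepsilon(t)p)$. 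Second, the passage from the integral bound $\int_0^{+\infty}\Theta(y(t))\,dt<+\infty$ to the pointwise limit $\Theta(y(t))/\varepsilon(t)\to 0$ is the most delicate step and is not a direct consequence of the oscillation control you describe; in \cite{AMO} it requires an additional differential inequality for a modified energy (of the type $\Theta(y(t))+\varepsilon(t)\Psi(y(t))$), together with the growth condition $-k\varepsilon^2\le\dot\varepsilon$. Your plan acknowledges the right ingredients but is vague precisely at this point.
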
 By specializing this result to our setting, we will obtain
the weak convergence of $y(\cdot)$ to a particular minimizer of $\Phi$,
which is the solution of a hierarchical minimization property. The
convergence of $x(\cdot)$ is less immediate, and will follow from
an energetical argument.

\subsection*{Analysis of the condition $(\mathcal{H}_{1})_{\epsilon}$:}

The condition 

\begin{center}
${(\mathcal{H}_{1})}_{\varepsilon}$ $\quad$ $\forall p\in R(N_{C}),$
$\quad$ ${\displaystyle \int_{0}^{+\infty}\Theta^{*}(\varepsilon(t)p)-\sigma_{C}(\varepsilon(t)p)dt<+\infty,}$
$\vphantom{}$ 
\par\end{center}

plays a crucial role in our asymptotic analysis. Before proceeding
in the discussion of this hypothesis, we recall some classical notions
from convex analysis, that will be useful. 
\begin{itemize}
\item $\sigma_{C}$ is the support function of $C$, 
\[
\sigma_{C}\left(x^{*}\right)=\underset{c\in C}{\sup}\left\langle x^{*},c\right\rangle .
\]

\item $N_{C}\left(x\right)$ is the normal cone to $C$ at $x,$ 
\[
N_{C}\left(x\right)=\left\{ x^{*}\in\mathcal{H}:\,\left\langle x^{*},c-x\right\rangle \leq0\textrm{ for all }c\in C\right\} \mbox{ if }x\in C,\mbox{ and }\emptyset\mbox{ otherwise}.
\]

\smallskip{}

\item $R\left(N_{C}\right)$ is the range of $N_{C}$, i.e. $p\in R\left(N_{C}\right)$
if and only if $p\in N_{C}\left(x\right)$ for some $x\in C.$

\smallskip{}

\item Note that $\delta_{C}^{*}=\sigma_{C}$ where $\delta_{C}$ is the
indicator function of $C$, 
\[
\delta_{C}:=\begin{cases}
0 & \textrm{if }x\in C\\
+\infty & \textrm{otherwise.}
\end{cases}
\]

\end{itemize}
$\vphantom{}$

$\vphantom{}$

{\if First, we show that the system ( where $\epsilon\left(t\right)\longrightarrow0$
as $t\longrightarrow+\infty$)

$\vphantom{}$

$\vphantom{}$

$\left(\textrm{MAG}\right)_{\epsilon}$$\qquad\qquad\qquad\qquad\qquad$$\dot{y}\left(t\right)+\partial\Theta\left(y\left(t\right)\right)+\epsilon\left(t\right)\partial\Psi\left(y\left(t\right)\right)=0,$

$\vphantom{}$

after time rescaling, $\left(\textrm{MAG}\right)_{\epsilon}$ can
be equivalently written as:

$\vphantom{}$

$\left(\textrm{MAG}\right)_{\beta}$$\qquad\qquad\qquad\qquad\qquad$$\dot{y}\left(t\right)+\partial\Psi\left(y\left(t\right)\right)+\beta\left(t\right)\partial\Theta\left(y\left(t\right)\right)=0$

$\vphantom{}$

with a positive control $t\longrightarrow\beta\left(t\right),$ $\beta\left(t\right)\longrightarrow+\infty$
as $t\longrightarrow+\infty.$

$\vphantom{}$

By \cite[Lemma 4.1.]{AMO}, there is an equivalence between the formulation
$\left(\textrm{MAG}\right)_{\epsilon}$ and $\left(\textrm{MAG}\right)_{\beta}$.

Let us observe that $\Psi+\beta\left(t\right)\Theta\uparrow\Psi+\delta_{C}$
as $t\longrightarrow+\infty$, where $\delta_{C}$ is the indicator
function of the set $C=\underset{C}{\arg\min}\Theta$ ($\delta_{C}\left(x\right)=0$
for $x\in C$, $+\infty$ outwards).

\fi}

Observe that in ${(\mathcal{H}_{1})}_{\varepsilon}$, all the terms
in the integral are nonnegative. Indeed, since $\Theta$ is bounded
from above by the indicator function of the set $C$, i.e. $\Theta\leq\delta_{C}$
(recall that $\Theta=0$ on $C$), the reverse inequality holds for
their Fenchel conjugates, whence 
\[
\Theta^{*}\left(\epsilon\left(t\right)p\right)-\sigma_{C}\left(\epsilon\left(t\right)p\right)\geq0\qquad\forall p\in\mathcal{H}.
\]
Thus, Hypothesis $\left(\mathcal{H}_{1}\right)_{\epsilon}$ means
that, for all $p\in R(N_{C})$ the nonnegative function 
\[
t\mapsto\left[\Theta^{*}\left(\epsilon\left(t\right)p\right)-\sigma_{C}\left(\epsilon\left(t\right)p\right)\right]
\]
is integrable on $\left(0,+\infty\right)$. For more clarity, let
us discuss the following special case: Suppose that 
\[
\Theta\left(x\right)\geq\frac{r}{2}\textrm{dis}^{2}\left(x,C\right),
\]
for some $r>0$. Then $\Theta^{*}\left(x\right)\leq\frac{1}{2r}\left\Vert x\right\Vert ^{2}+\sigma_{C}\left(x\right)$
and 
\[
\Theta^{*}\left(z\right)-\sigma_{C}\left(z\right)\leq\frac{1}{2r}\left\Vert z\right\Vert ^{2} .
\]
Hence, in this situation $\left(\mathcal{H}_{1}\right)_{\epsilon}$
is satisfied if the following condition on $\epsilon(\cdot)$ is satisfied:
\[
\int_{0}^{+\infty}\epsilon^{2}\left(t\right)<+\infty.
\]
$\vphantom{}$ In this situation, the moderate growth condition on
$\epsilon(\cdot)$, can be formulated as 
\[
\epsilon(\cdot)\in L^{2}(0,+\infty)\smallsetminus L^{1}(0,+\infty).
\]
Let us return to the general situation, and summarize our results
in the following theorem, which is our main statement.


\begin{thm}\label{Theorem 3.2.} Let $\Phi:\mathcal{H}\rightarrow\mathbb{R}^{+}\cup\{+\infty\}$
be a closed convex function, such that $C$ = $\mbox{{\rm argmin}}\Phi = \Phi^{-1}(0) \neq\emptyset.$
Let us assume that, 
\begin{itemize}
\item ${(\mathcal{H}_{1})}_{\varepsilon}$%
\mbox{%
}%
\mbox{%
} $\forall p\in R(N_{C}),$ ${\displaystyle \int_{0}^{+\infty}\mu\Phi^{*}(\frac{1}{\mu}\varepsilon(t)p)-\sigma_{C}(\varepsilon(t)p)+\frac{1}{2}\|\varepsilon(t)p\|^{2}dt<+\infty;}$ 
\item ${(\mathcal{H}_{2})}_{\varepsilon}$%
\mbox{%
}%
\mbox{%
} $\varepsilon(\cdot)$ is a nonincreasing function of class $C^{1}$,
Lipschitz continuous on $[0,+\infty[$, and such that $\lim_{t\to+\infty}\varepsilon(t)=0$,
${\displaystyle \int_{0}^{+\infty}\varepsilon(t)dt=+\infty}$, and
for some $k\geq0$, $-k\varepsilon^{2}\leq\dot{\varepsilon}$. 
\end{itemize}
Then, for any trajectory $\left(x\left(\cdot\right),\upsilon\left(\cdot\right)\right):\left[0,+\infty\right[\longrightarrow\mathcal{\mathcal{H}\times H}$
solution of (\ref{2-a})-(\ref{2-b}), with $y\left(t\right)=x\left(t\right)+\mu\upsilon\left(t\right)$

\medskip{}

$\ \quad\quad\ \ (i)\mbox{ weak convergence }\quad w-\lim_{t\to+\infty}y(t)=\mbox{{\rm proj}}_{\mbox{{\rm argmin}}\Phi}0.$

\medskip{}

Let us further assume that \ $\Phi(0)<+\infty$. Then 
\begin{eqnarray*}
(ii)\  & \mbox{ weak convergence } & \qquad w-\lim_{t\to+\infty}x(t)=w-\lim_{t\to+\infty}y(t)=\mbox{{\rm proj}}_{\mbox{{\rm argmin}}\Phi}0;\\
(iii) & \mbox{ strong convergence } & s-\lim_{t\to+\infty}v(t)=0,\quad and\ hence\ s-\lim_{t\to+\infty}x(t)-y(t)=0.
\end{eqnarray*}
\end{thm}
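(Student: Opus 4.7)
I will apply the Attouch--Czarnecki theorem (Theorem~\ref{Theweak_eps}) to the equivalent multiscale gradient formulation. By Lemma~\ref{Lemme 3.1}, system \eqref{15-a} reads
\[
\dot y(t)+\nabla\Theta(y(t))+\varepsilon(t)\nabla\Psi(y(t))=0,
\]
with $\Theta(y):=\mu\Phi_\mu(y)$ and $\Psi(y):=\mu^2(\Phi^*)_{1/\mu}(y/\mu)$, both everywhere-defined, convex and of class $C^1$. The assumption $C=\mathrm{argmin}\,\Phi=\Phi^{-1}(0)\neq\emptyset$ together with $\Phi\ge 0$ forces $\mathrm{argmin}\,\Theta=\Theta^{-1}(0)=C$ and $\min\Theta=0$, so the outer potential is of the form required in Theorem~\ref{Theweak_eps}.

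The first technical step is to verify that hypothesis $(\mathcal{H}_1)_\varepsilon$ stated for $\Theta$ in Theorem~\ref{Theweak_eps} coincides with the one assumed here. Using $\Phi_\mu=\Phi\,\square\,\tfrac{1}{2\mu}\|\cdot\|^2$ together with the Fenchel duality recalled in Remark~\ref{Remark 3.2} (the qualification is trivial since $\tfrac{1}{2\mu}\|\cdot\|^2$ is everywhere continuous), one computes
\[
\Theta^*(p)=(\mu\Phi_\mu)^*(p)=\mu\Phi^*(p/\mu)+\tfrac{1}{2}\|p\|^2,
\]
so that $\Theta^*(\varepsilon(t)p)-\sigma_C(\varepsilon(t)p)$ is exactly the integrand appearing in the present $(\mathcal{H}_1)_\varepsilon$; hypothesis $(\mathcal{H}_2)_\varepsilon$ matches on both sides. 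Next I determine the hierarchical selection set $S=\mathrm{argmin}_C\Psi$ by means of the alternative d.c.\ formula $\Psi(y)=\tfrac{1}{2}\|y\|^2-\mu\Phi_\mu(y)$ derived just before the theorem: for $y\in C$ one has $\Phi_\mu(y)=\Phi(y)=0$, hence $\Psi(y)=\tfrac{1}{2}\|y\|^2$, and minimizing $\|\cdot\|^2$ over the closed convex set $C$ gives $S=\{\mathrm{proj}_C 0\}$. Conclusion~(i) of Theorem~\ref{Theweak_eps} then yields $y(t)\rightharpoonup \mathrm{proj}_{\mathrm{argmin}\,\Phi}0$, which is (i).

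For the remaining two conclusions I invoke Theorem~\ref{Theweak_eps}(ii), which gives $\Theta(y(t))\to 0$, i.e.\ $\mu\Phi_\mu(y(t))\to 0$. The Moreau decomposition
\[
\Phi_\mu(y(t))=\Phi(x(t))+\tfrac{1}{2\mu}\|y(t)-x(t)\|^2
\]
has two nonnegative summands (since $\Phi\ge 0$), so each tends to zero; in particular $\|y(t)-x(t)\|\to 0$, and since $\upsilon(t)=\tfrac{1}{\mu}(y(t)-x(t))$, this is exactly (iii). Statement~(ii) is then immediate: $x(t)=y(t)-\mu\upsilon(t)$ converges weakly to the same limit as $y(t)$.

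I expect the main obstacle to lie in the Fenchel-conjugate computation and the precise matching of $(\mathcal{H}_1)_\varepsilon$ with its Attouch--Czarnecki counterpart; everything else is a direct transcription from Theorem~\ref{Theweak_eps}. The extra hypothesis $\Phi(0)<+\infty$ plays no visible role in the route above, and I anticipate that the authors use it in a more self-contained energetic derivation of~(iii): pairing the ODE with $\upsilon(t)$ produces an identity for the Lyapunov candidate $\lambda\Phi(x(t))+\tfrac{1}{2}\|\upsilon(t)\|^2$ in which the coupling $\varepsilon(t)\langle\upsilon(t),x(t)\rangle$ is controlled via the subgradient inequality $\langle\upsilon(t),x(t)\rangle\ge\Phi(x(t))-\Phi(0)$, available only when $\Phi(0)<+\infty$.
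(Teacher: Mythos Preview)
Your proof is correct, and for part~(i) it follows essentially the same route as the paper: apply Theorem~\ref{Theweak_eps} with $\Theta=\mu\Phi_\mu$ and $\Psi=\mu^2(\Phi^*)_{1/\mu}(\cdot/\mu)$, and compute $\Theta^*$ to match $(\mathcal{H}_1)_\varepsilon$. Your identification of $S$ via the d.c.\ formula is slightly more direct than the paper's, which instead writes the first-order optimality condition $\nabla\Psi(y_\infty)+N_C(y_\infty)\ni 0$ and simplifies using $\mathrm{prox}_{\mu\Phi}y_\infty=y_\infty$ on $C$. (A harmless slip: the d.c.\ expression you quote is $\psi$, not $\Psi=\mu\psi$; this does not affect the argmin.)

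For part~(iii) your route is genuinely different and in fact shorter. The paper does \emph{not} use $\Theta(y(t))\to 0$ from Theorem~\ref{Theweak_eps}; instead it pairs the equation with $\dot y(t)$, integrates by parts, and uses $\Phi(0)<+\infty$ to guarantee $\inf\Psi>-\infty$ (via $\inf\Phi^*=-\Phi^{**}(0)=-\Phi(0)$), which yields $\dot y\in L^2$. A bootstrap then gives $\dot y(t)\to 0$, hence $\nabla\Theta(y(t))=y(t)-x(t)\to 0$. Your argument bypasses all of this: from $\Theta(y(t))\to 0$ and the decomposition $\Phi_\mu(y(t))=\Phi(x(t))+\tfrac{1}{2\mu}\|y(t)-x(t)\|^2$ with both terms nonnegative (since $\Phi\ge 0$ by hypothesis), you read off $\|y(t)-x(t)\|\to 0$ directly. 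This makes the extra assumption $\Phi(0)<+\infty$ unnecessary, confirming your suspicion that it is an artifact of the paper's energetic approach. Your guess about \emph{how} the paper uses it, however, is off: it enters through $\inf\Psi>-\infty$ in the energy estimate for $y$, not through a subgradient inequality involving $\langle\upsilon(t),x(t)\rangle$.
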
 \begin{proof} Let us apply Theorem \ref{Theweak_eps}
with $\Theta(y)=\mu\Phi_{\mu}(y)$, and $\Psi\left(y\right)=\mu^{2}\left(\Phi^{*}\right)_{\frac{1}{\mu}}\left(\frac{1}{\mu}y\right)$,
which are two convex continuously differentiable functions. By the
general properties of the Moreau enveloppe, the function $\Theta$
is still nonnegative, and 
\[
\mbox{{\rm argmin}}\Theta=\mbox{{\rm argmin}}\Phi_{\mu}=\mbox{{\rm argmin}}\Phi=C.
\]
Moreover, on $C$ we have $\Theta =0$. Let us particularize the conditions ${(\mathcal{H}_{1})}_{\varepsilon}$
and ${(\mathcal{H}_{2})}_{\varepsilon}$ to our setting. Let us first
compute $\Theta^{*}$, the conjugate of $\Theta$. For any $z\in\mathcal{H}$
\begin{align*}
\Theta^{*}(z) & =\sup_{y}\left\lbrace \langle z,y\rangle-\mu\Phi_{\mu}(y)\right\rbrace \\
 & =\mu\sup_{y}\left\lbrace \langle\frac{1}{\mu}z,y\rangle-\Phi_{\mu}(y)\right\rbrace \\
 & =\mu\left(\Phi_{\mu}\right)^{*}\left(\frac{1}{\mu}z\right)\\
 & =\mu\left(\Phi^{*}\left(\frac{1}{\mu}z\right)+\frac{\mu}{2}\|\frac{1}{\mu}z\|^{2}\right)\\
 & =\mu\Phi^{*}\left(\frac{1}{\mu}z\right)+\frac{1}{2}\|z\|^{2}.
\end{align*}
Hence conditions ${(\mathcal{H}_{1})}_{\varepsilon}$ and ${(\mathcal{H}_{2})}_{\varepsilon}$
of Theorem \ref{Theweak_eps} are satisfied. As a consequence, we
obtain the convergence of $y(\cdot)$ to a solution $y_{\infty}$
of the constrained minimization problem 
\begin{equation}
\min\left\lbrace \Psi(x):\ x\in C\right\rbrace .\label{visco-select1}
\end{equation}

Let us write the first-order optimality condition satisfied by $y_{\infty}$.
We have 
\begin{equation}
\nabla\Psi(y_{\infty})+N_{C}(y_{\infty})\ni0.\label{visco-select2}
\end{equation}
Since $\nabla\Psi=\mu\textrm{\ensuremath{\textrm{prox}_{\mu\Phi}}}$,
equivalently 
\begin{equation}
\mu\textrm{\ensuremath{\textrm{prox}_{\mu\Phi}}}(y_{\infty})+N_{C}(y_{\infty})\ni0.\label{visco-select3}
\end{equation}
Noticing that $y_{\infty}\in C$, and that $z=\textrm{\ensuremath{\textrm{prox}_{\mu\Phi}}}z$
for $z\in C=\mbox{{\rm argmin}}\Phi$, we obtain 
\begin{equation}
\mu y_{\infty}+N_{C}(y_{\infty})\ni0.\label{visco-select4}
\end{equation}
By definition of $N_{C}$, equivalently, the following property is
satisfied 
\[
\left\langle 0-y_{\infty},c-y_{\infty}\right\rangle \leq0\quad\forall c\in C.
\]
Since $y_{\infty}\in C$, this is the condition of the obtuse angle
that characterizes the projection of the origin on $C$. Thus 
\begin{equation}
y_{\infty}={\rm \mbox{proj}}_{C}(0).\label{visco-select5}
\end{equation}
We have obtained that $y(\cdot)$ converges weakly to the element
of minimal norm of the solution set $C$, that's item $(i)$. In order
to pass from the convergence of $y(\cdot)$ to the convergence of
$x(\cdot)$ we use the relation (\ref{7}) 
\[
x\left(t\right)=\textrm{prox}_{\mu\Phi}\left(y\left(t\right)\right)
\]
that links the two variables.\\
 In a finite dimensional setting, we can conclude the strong convergence of
$x(\cdot)$  thanks to the continuity
of the proximal mapping, and using again the fact that the set $C$
of minimizers of $\Phi$ is invariant by the proximal mapping $\textrm{prox}_{\mu\Phi}$,
i.e., 
\[
\textrm{prox}_{\mu\Phi}(y)=y\quad{\rm for\ all\ }y\in C=\mbox{{\rm argmin}}\Phi.
\]
In an infinite dimensional setting, we are going to use the particular
structure of our dynamical system, and an energetical argument to
show that 
\begin{equation}
x(t)-y(t)\to0\mbox{ strongly as }t\to+\infty.\label{energy1}
\end{equation}
This will result from the finite energy property 
\begin{equation}
\int_{0}^{\infty}\|\dot{y}(t)\|^{2}dt<+\infty.\label{energy2}
\end{equation}
To obtain (\ref{energy2}), take the scalar product of equation (\ref{34})
with $\dot{y}\left(t\right)$. We obtain 
\begin{equation}
\|\dot{y}\left(t\right)\|^{2}+\frac{d}{dt}\left(\Theta\left(y\left(t\right)\right)\right)+\epsilon\left(t\right)\frac{d}{dt}\left(\Psi\left(y\left(t\right)\right)\right)=0.
\end{equation}
After integration by parts we obtain, for any $T>0$ 
\begin{equation}
\int_{0}^{T}\|\dot{y}\left(t\right)\|^{2}dt+\Theta\left(y\left(T\right)\right)-\Theta\left(y\left(0\right)\right)+\epsilon\left(T\right)\Psi\left(y\left(T\right)\right)-\epsilon\left(0\right)\Psi\left(y\left(0\right)\right)-\int_{0}^{T}\dot{\epsilon}(t)\Psi\left(y\left(t\right)\right)dt=0.\label{energy3}
\end{equation}
By assumption $\Phi(0)<+\infty$. By Remark \ref{Remark 3.2-1} $\Phi^{**}=\Phi$,
we equivalently have $\inf\Phi^{*}>-\infty$, and hence 
\[
\inf\Psi=\inf\mu^{2}\left(\Phi^{*}\right)_{\frac{1}{\mu}}=\mu^{2}\inf\Phi^{*}>-\infty.
\]
Set $m:=\inf\Psi$. From (\ref{energy3}), and $\dot{\epsilon}(t)\leq0$
(recall that $\varepsilon(\cdot)$ is a nonincreasing function) we
deduce that 
\begin{equation}
\int_{0}^{T}\|\dot{y}\left(t\right)\|^{2}dt\leq\Theta\left(y\left(0\right)\right)+\epsilon\left(0\right)\Psi\left(y\left(0\right)\right)+|m|\epsilon\left(T\right)+m\int_{0}^{T}\dot{\epsilon}(t)dt.\label{energy4}
\end{equation}
Since the above majorization is valid for any $T>0$, and $\epsilon$
is bounded (it decreases to zero), we obtain (\ref{energy2}).\\
 We now observe that $y(\cdot)$ is Lipschitz continuous on $[0,+\infty[$.
This follows from equation (\ref{34}), and the following argument.
Since $y(\cdot)$ is converging weakly, it is bounded. By the Lipschitz
continuity of the operators $\nabla\Theta=\mu\nabla\Phi_{\mu}$ and
$\nabla\Psi=\mu\textrm{prox}_{\mu\Phi}$ (which are therefore bounded
on bounded sets), and by equation (\ref{34}), we deduce that $\dot{y}(\cdot)$
is bounded, and hence $y(\cdot)$ is Lipschitz continuous on $[0,+\infty[$.\\
 Using again equation (\ref{34}), and the Lipschitz continuity properties
of $\nabla\Theta$ and $\nabla\Psi$, $y$, and $\varepsilon$ (for
this last property note that for some $c>0$, $-c\leq-k\varepsilon^{2}\leq\dot{\varepsilon}\leq0$),
we deduce that $t\mapsto\dot{y}(t)$ is Lipschitz continuous on $[0,+\infty[$.
Hence $\dot{y}(\cdot)$ belongs to $L^{2}([0,+\infty[;\mathcal{H})$,
and is Lipschitz continuous. By a classical result this implies 
\[
\lim_{t\to+\infty}\dot{y}(t)=0.
\]
Returning to (\ref{34}), and noticing that $\varepsilon(t)\nabla\Psi(y(t))\to0$,
we obtain 
\[
\lim_{t\to+\infty}\nabla\Theta(y(t))=0.
\]
Since $\nabla\Theta(y(t))=y(t)-\textrm{\ensuremath{\textrm{prox}_{\mu\Phi}}}\left(y(t)\right)=y(t)-x(t)$,
we finally obtain that $x(t)-y(t)$ converges strongly to zero as
$t\longrightarrow+\infty$, which clearly implies that $x(\cdot)$
and $y(\cdot)$ converge weakly to the same limit, which is the solution
of a hierarchical minimization problem. \end{proof}

\noindent \textbf{Example}: Let us return to our model situation where
\[
\Phi\left(x\right)\geq\frac{r}{2}\textrm{dis}^{2}\left(x,C\right),
\]
for some $r>0$. Then $\Phi^{*}\left(x\right)\leq\frac{1}{2r}\left\Vert x\right\Vert ^{2}+\sigma_{C}\left(x\right)$
and 
\[
\Phi^{*}\left(z\right)-\sigma_{C}\left(z\right)\leq\frac{1}{2r}\left\Vert z\right\Vert ^{2}.
\]
After elementary computation, one can verify that, in this situation,
$\left(\mathcal{H}_{1}\right)_{\epsilon}$ is satisfied if the following
condition on $\epsilon(\cdot)$ is satisfied: 
\[
\int_{0}^{+\infty}\epsilon^{2}\left(t\right)<+\infty.
\]
$\vphantom{}$ Thus, in this situation, the moderate growth condition
on $\epsilon(\cdot)$, can be formulated as 
\[
\epsilon(\cdot)\in L^{2}(0,+\infty)\smallsetminus L^{1}(0,+\infty).
\]

\subsection{Strong convergence}

Let us now examine the strong convergence properties of the trajectories.
Let us first consider the variable $y(\cdot)$. Following \cite[Theorem 2.2]{AMO},
and equation (\ref{34}), the strong convergence of $y(\cdot)$ will
result from the strong monotonicity property of $\nabla\Psi=\mu\textrm{\ensuremath{\textrm{prox}_{\mu\Phi}}}$.
We recall that $\nabla\Psi$ is said to be strongly monotone if there
exists some $\alpha>0$ such that for any $x\in\textrm{dom}\nabla\Psi$,
and $y\in\textrm{dom}\nabla\Psi$ 
\[
\left\langle \nabla\Psi\left(x\right)-\nabla\Psi\left(y\right),x-y\right\rangle \geq\alpha\left\Vert x-y\right\Vert ^{2}.
\]

This property turns out to be equivalent to a regularity property
for $\Phi$, as stated in the following Lemma. \begin{lem}\label{strong1}
Let $\Phi$ be a convex differentiable function whose gradient is
L-Lipschitz continuous for some $L>0$. Then, for any $\mu>0$ such
that $\mu L<1$, the proximal mapping $\textrm{\ensuremath{\textrm{prox}_{\mu\Phi}}}$
is strongly monotone. \end{lem}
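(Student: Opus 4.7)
The plan is to exploit directly the defining equation of the proximal mapping and combine the monotonicity and Lipschitz properties of $\nabla\Phi$. Since $\Phi$ is differentiable, the resolvent identity reads $\text{prox}_{\mu\Phi} = (I+\mu\nabla\Phi)^{-1}$, which is much more tractable than the general subdifferential case. So I would fix arbitrary $x,y\in\mathcal{H}$, set $u = \text{prox}_{\mu\Phi}(x)$ and $v = \text{prox}_{\mu\Phi}(y)$, and work from the two identities $x = u + \mu\nabla\Phi(u)$ and $y = v + \mu\nabla\Phi(v)$.

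Subtracting gives the key relation
\[
x - y = (u-v) + \mu\bigl(\nabla\Phi(u)-\nabla\Phi(v)\bigr).
\]
The first step is to take the inner product of this identity with $u - v$ and invoke the monotonicity of $\nabla\Phi$ to obtain
\[
\langle x-y,\, u-v\rangle = \|u-v\|^{2} + \mu\langle \nabla\Phi(u)-\nabla\Phi(v),\, u-v\rangle \geq \|u-v\|^{2}.
\]
The second step is to bound $\|x-y\|$ from above in terms of $\|u-v\|$ using the $L$-Lipschitz continuity of $\nabla\Phi$ and the triangle inequality applied to the same identity:
\[
\|x-y\| \leq \|u-v\| + \mu\,\|\nabla\Phi(u)-\nabla\Phi(v)\| \leq (1+\mu L)\,\|u-v\|.
\]
Combining the two inequalities yields
\[
\langle x-y,\, u-v\rangle \geq \|u-v\|^{2} \geq \frac{1}{(1+\mu L)^{2}}\,\|x-y\|^{2},
\]
so $\text{prox}_{\mu\Phi}$ is strongly monotone with constant $\alpha = 1/(1+\mu L)^{2} > 0$.

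There is no real obstacle here: both ingredients (monotonicity and the Lipschitz bound on $\nabla\Phi$) interact very cleanly through the resolvent identity. I would remark, however, that the argument actually gives a positive constant $\alpha$ for \emph{any} $\mu,L>0$; the restriction $\mu L<1$ in the statement is only needed if one wishes to retain a lower bound $\alpha > 1/4$, or if the author prefers to derive the estimate via a contraction/Banach-fixed-point approach applied to the map $u \mapsto x - \mu\nabla\Phi(u)$, whose Lipschitz constant $\mu L$ must then be strictly less than $1$. A sharper constant could in principle be obtained by invoking the Baillon--Haddad theorem (which upgrades Lipschitz continuity of $\nabla\Phi$ to $(1/L)$-cocoercivity), but the elementary estimate above is already sufficient for the strong monotonicity conclusion.
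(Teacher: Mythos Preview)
Your proof is correct, and it starts from the same resolvent identity as the paper, but the two arguments diverge at the inner-product step. The paper takes the scalar product of the difference $x-y=(u-v)+\mu(\nabla\Phi(u)-\nabla\Phi(v))$ with $x-y$ itself, obtaining
\[
\langle u-v,\,x-y\rangle+\mu\langle\nabla\Phi(u)-\nabla\Phi(v),\,x-y\rangle=\|x-y\|^{2},
\]
and then bounds the second term via Cauchy--Schwarz, the $L$-Lipschitz property of $\nabla\Phi$, and the nonexpansiveness of the proximal mapping, arriving at the cleaner estimate $\langle u-v,\,x-y\rangle\ge(1-\mu L)\|x-y\|^{2}$. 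This is precisely why the hypothesis $\mu L<1$ appears in the statement: it is what makes the paper's constant positive. You instead take the inner product with $u-v$ and use the \emph{monotonicity} of $\nabla\Phi$ (rather than Cauchy--Schwarz), which yields the constant $1/(1+\mu L)^{2}$ with no restriction on $\mu L$; your closing remark about this is exactly right. So your route is slightly more general, while the paper's route directly accounts for the stated hypothesis and produces a sharper constant for small $\mu L$.
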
 \begin{proof} Take $y_{i}$, $i=1,2$.
By definition of $\textrm{\ensuremath{\textrm{prox}_{\mu\Phi}}}(y_{i})$,
\ $\textrm{\ensuremath{\textrm{prox}_{\mu\Phi}}}(y_{i})+\mu\nabla\Phi(\textrm{\ensuremath{\textrm{prox}_{\mu\Phi}}}(y_{i}))=y_{i}$.
Taking the difference of the two equations, and multiplying scalarly
by $y_{2}-y_{1}$, we obtain 
\[
\left\langle \textrm{\ensuremath{\textrm{prox}_{\mu\Phi}}}(y_{2})-\textrm{\ensuremath{\textrm{prox}_{\mu\Phi}}}(y_{1}),y_{2}-y_{1}\right\rangle +\mu\left\langle \nabla\Phi(\textrm{\ensuremath{\textrm{prox}_{\mu\Phi}}}(y_{2})-\nabla\Phi(\textrm{\ensuremath{\textrm{prox}_{\mu\Phi}}}(y_{1}),y_{2}-y_{1}\right\rangle =\|y_{2}-y_{1}\|^{2}.
\]
Then use the Cauchy-Schwarz inequality, the $L$-Lipschitz continuity
of $\nabla\Phi$, and the fact that the proximal mapping is nonexpansive
to obtain 
\[
\left\langle \textrm{\ensuremath{\textrm{prox}_{\mu\Phi}}}(y_{2})-\textrm{\ensuremath{\textrm{prox}_{\mu\Phi}}}(y_{1}),y_{2}-y_{1}\right\rangle \geq(1-\mu L)\|y_{2}-y_{1}\|^{2}.
\]
Conversely, one can easily establish that the strong monotonicity
of the proximal mapping implies that $\Phi$ is a convex differentiable
function whose gradient is Lipschitz continuous. \end{proof}

We can now complete Theorem \ref{Theorem 3.2.} as follows. \begin{thm}\label{strong2}
Let us make the assumptions of Theorem \ref{Theorem 3.2.}, and assume
moreover that $\Phi$ is a convex differentiable function whose gradient
is L-Lipschitz continuous for some $L>0$. Then for $\mu L<1$, we
have the strong convergence property of $x(\cdot)$ and $y(\cdot)$
to the element of minimal norm of $C$ = $\mbox{{\rm argmin}}\Phi\neq\emptyset.$
\end{thm}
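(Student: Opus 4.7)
The plan is to reduce the strong convergence claim to the strong monotonicity of $\nabla\Psi$, together with the results already established in Theorem \ref{Theorem 3.2.}. First I would invoke Lemma \ref{strong1}: the additional hypothesis that $\nabla\Phi$ is $L$-Lipschitz with $\mu L<1$ gives the strong monotonicity of $\textrm{prox}_{\mu\Phi}$ with constant $(1-\mu L)$, and hence strong monotonicity of $\nabla\Psi=\mu\,\textrm{prox}_{\mu\Phi}$ with constant $\alpha=\mu(1-\mu L)>0$.

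Next I would obtain the strong convergence of $y(\cdot)$. Two routes are available. The quickest is to apply the strong-convergence companion of Theorem \ref{Theweak_eps} (Theorem 2.2 in \cite{AMO}), which, under the same hypotheses ${(\mathcal H_1)}_\varepsilon$ and ${(\mathcal H_2)}_\varepsilon$ already verified in the proof of Theorem \ref{Theorem 3.2.}, upgrades the weak convergence $y(t)\rightharpoonup y_\infty$ to strong convergence as soon as $\nabla\Psi$ is strongly monotone. Alternatively, one can give a self-contained argument: differentiating $h(t)=\tfrac12\|y(t)-y_\infty\|^2$ along (\ref{34}) and using (i) the convexity of $\Theta$ with $\Theta(y_\infty)=0=\min\Theta$, (ii) the strong monotonicity of $\nabla\Psi$, and (iii) the optimality condition $\nabla\Psi(y_\infty)+\zeta=0$ with $\zeta\in N_C(y_\infty)$, leads to a differential inequality of the form
\[
\dot h(t)+\alpha\,\varepsilon(t)\,h(t)\le \varepsilon(t)\,\langle\zeta,y(t)-y_\infty\rangle.
\]
Since $y(t)\rightharpoonup y_\infty$ (by Theorem \ref{Theorem 3.2.}(i)) and $\zeta\in N_C(y_\infty)$, the right-hand side, once multiplied by $\varepsilon(t)$, is integrable in view of $({\mathcal H}_1)_\varepsilon$ and of the bounds that already appear in the proof of Theorem \ref{Theorem 3.2.}; together with $\int_0^{+\infty}\varepsilon(t)\,dt=+\infty$, a Gronwall-type argument then forces $h(t)\to 0$.

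Finally, the strong convergence of $x(\cdot)$ is immediate: since $x(t)=\textrm{prox}_{\mu\Phi}(y(t))$, the nonexpansivity of the proximal mapping and the fixed-point identity $\textrm{prox}_{\mu\Phi}(z)=z$ for every $z\in C=\mbox{argmin}\,\Phi$ give
\[
\|x(t)-y_\infty\|=\|\textrm{prox}_{\mu\Phi}(y(t))-\textrm{prox}_{\mu\Phi}(y_\infty)\|\le\|y(t)-y_\infty\|\longrightarrow 0.
\]
Equivalently, since $x(t)-y(t)\to0$ strongly was already established in Theorem \ref{Theorem 3.2.}(iii) (note that $\Phi(0)<+\infty$ holds automatically here because $\Phi$ is everywhere defined and differentiable), the strong convergence of $x(\cdot)$ follows from that of $y(\cdot)$. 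As a by-product, $v(t)=\lambda(y(t)-x(t))\to 0$ strongly, and both $x(t)$ and $y(t)$ converge strongly to $\mbox{proj}_C(0)$.

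The main obstacle is the passage from weak to strong convergence of $y(\cdot)$, i.e., the step where the strong monotonicity of $\nabla\Psi$ has to be combined in a quantitative way with the hierarchical selection; if one does not wish to quote Theorem 2.2 of \cite{AMO} as a black box, the cross term $\varepsilon(t)\langle\zeta,y(t)-y_\infty\rangle$ in the differential inequality above is the only delicate point, and it is controlled exactly by the integrability built into $({\mathcal H}_1)_\varepsilon$ together with the energy estimate (\ref{energy2}) already proved.
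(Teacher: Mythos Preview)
Your proposal is correct and follows essentially the same route as the paper: invoke Lemma \ref{strong1} to obtain strong monotonicity of $\nabla\Psi=\mu\,\textrm{prox}_{\mu\Phi}$, apply \cite[Theorem 2.2]{AMO} to upgrade the weak convergence of $y(\cdot)$ to strong convergence, and then deduce the strong convergence of $x(\cdot)$ from that of $y(\cdot)$ (the paper uses $x(t)-y(t)\to 0$ from Theorem \ref{Theorem 3.2.}(iii), which you also mention). The only minor difference is in the alternative arguments offered: the paper's alternative uses the strong convexity of $\Psi$ together with $\Psi(y(t))\to\Psi(y_\infty)$ from Theorem \ref{Theweak_eps}, whereas your alternative is a Gronwall-type differential inequality (which also works, though the appeal to $(\mathcal{H}_1)_\varepsilon$ is unnecessary there---the weak convergence $y(t)\rightharpoonup y_\infty$ alone makes $\langle\zeta,y(t)-y_\infty\rangle\to 0$, and this suffices).
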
 
\begin{proof} By Theorem \ref{Theorem 3.2.} item $(iii)$,
$x(t)-y(t)$ converges strongly to zero as $t\longrightarrow+\infty$.
Hence we just need to prove that $y(\cdot)$ converges strongly .
Since $\mu L<1$, by Lemma \ref{strong1}, the operator $\nabla\Psi=\mu\textrm{\ensuremath{\textrm{prox}_{\mu\Phi}}}$
is strongly monotone. Thus we are in the situation examined in \cite[Theorem 2.2]{AMO},
which gives the strong convergence property. Another equivalent approach
consists in noticing that by Theorem \ref{Theweak_eps}, we have $w-\lim_{t\to+\infty}y(t)=\mbox{{\rm proj}}_{\mbox{{\rm argmin}}\Phi}0$
and $\Psi(y(t))\to\Psi(\mbox{{\rm proj}}_{\mbox{{\rm argmin}}\Phi}0)$.
From this we easily deduce that the strong convexity of $\Psi$ implies
the strong convergence of $y(\cdot)$. We recover our result by noticing
that the strong convexity of $\Psi$ is equivalent to the strong monotonicity
of its gradient, i.e., of the proximal mapping. \end{proof}

\section{Other viscosity selection principles}

Let us now examine the more general situation \begin{subequations}
\begin{eqnarray}
 &  & \upsilon\left(t\right)\in\partial\Phi\left(x\left(t\right)\right)\label{new-1}\\
 &  & \lambda\dot{x}\left(t\right)+\dot{\upsilon}\left(t\right)+\upsilon\left(t\right)+\varepsilon\left(t\right)\partial g(x\left(t\right))\ni0,\label{new-2}
\end{eqnarray}
\end{subequations} where $g$ is a convex viscosity function.\\
 Using the Minty transform, this system can be equivalently written
as \begin{subequations} 
\begin{eqnarray}
 &  & x\left(t\right)=\textrm{prox}_{\mu\Phi}\left(y\left(t\right)\right)\label{new-3}\\
 &  & \dot{y}\left(t\right)+\mu\nabla\Phi_{\mu}\left(y\left(t\right)\right)+\mu\epsilon\left(t\right)\partial g(\textrm{prox}_{\mu\Phi}\left(y\left(t\right)\right))\ni0.\label{new-4}
\end{eqnarray}
\end{subequations}
In order to recover exactly the Tikhonov approximation, we look for
some $g$ such that, for all $y\in\mathcal{H}$ 
\[
\partial(\mu g)(\textrm{prox}_{\mu\Phi}\left(y\right))=y.
\]
Equivalently 
\[
(I+\mu\partial\Phi)^{-1}=(\partial(\mu g))^{-1}.
\]
We obtain 
\[
I+\mu\partial\Phi=\partial(\mu g),
\]
that is, for all $y\in\mathcal{H}$ 
\[
\mu g(y)=\frac{1}{2}\|y\|^{2}+\mu\Phi(y).
\]
Thus, by taking $g(y)=\frac{1}{2\mu}\|y\|^{2}+\Phi(y)$, and $\Theta\left(y\right)=\mu\Phi_{\mu}\left(y\right)$,
equation (\ref{new-4}) can be equivalently written as 
\begin{equation}
\dot{y}\left(t\right)+\nabla\Theta\left(y\left(t\right)\right)+\epsilon\left(t\right)y\left(t\right)=0 .\label{eq:61}
\end{equation}
Equation (\ref{eq:61}) is a particular case of the (SDC) system (steepest
descent with control)
\[
\left(\textrm{SDC}\right)\qquad\qquad\dot{y}\left(t\right)+\partial\Theta\left(y\left(t\right)\right)+\epsilon\left(t\right)y\left(t\right)=0 .
\]
Concerning the case $\int_{0}^{+\infty}\epsilon\left(t\right)dt=+\infty$,
the first general convergence result was in \cite{Reich} (based on
previous work by \cite{Browder}), and also requires $\epsilon\left(\cdot\right)$
to be nonincreasing, and converges to zero for $t\longrightarrow+\infty$.
Under these conditions, each trajectory of (\ref{eq:61}) converges
strongly to the point of minimal norm in $C=\arg\min\Theta=\arg\min\Phi$.
In \cite{CPS}, it is proved that the convergence result still holds
without assuming $\epsilon\left(\cdot\right)$ to be nonincreasing. 

When $g(\xi)=\frac{1}{2\mu}\|\xi\|^{2}+\Phi(\xi)$, the dynamical system
(\ref{new-1})-(\ref{new-2}) becomes 
\begin{subequations} 
\begin{eqnarray}
 &  & \upsilon\left(t\right)\in\partial\Phi\left(x\left(t\right)\right)\label{new-5a}\\
 &  & \lambda\dot{x}\left(t\right)+\dot{\upsilon}\left(t\right)+\upsilon\left(t\right)+\varepsilon\left(t\right)
\left(\frac{1}{\mu} x\left(t\right) + \upsilon\left(t\right) \right) =0.\label{new-6a}
\end{eqnarray}
\end{subequations}
Equivalently
\begin{subequations} 
\begin{eqnarray}
 &  & \upsilon\left(t\right)\in\partial\Phi\left(x\left(t\right)\right)\label{new-5}\\
 &  & \dot{x}\left(t\right)+ \mu\dot{\upsilon}\left(t\right)+\mu(1+\varepsilon\left(t\right))\upsilon\left(t\right)+\varepsilon\left(t\right)x\left(t\right)=0.\label{new-6}
\end{eqnarray}
\end{subequations}

As in the preceding section, by application of the Cauchy-Lipschitz theorem (recall that $\Theta$ is differentiable, and its gradient is Lipschitz continuous), we can show that (\ref{eq:61}) admits a
strong  global solution $y\left(\cdot\right)$. Then, 
$\left(x\left(\cdot\right),\upsilon\left(\cdot\right)\right)$ with
$x\left(\cdot\right)=\textrm{prox}_{\mu\Phi}\left(y\left(\cdot\right)\right)$
and $\upsilon\left(\cdot\right)=\nabla\Phi_{\mu}\left(y\left(\cdot\right)\right)$
is a strong solution of (\ref{new-5})-(\ref{new-6}). 

Let us summarize our result in the following theorem. 

\begin{thm} Let $\Phi:\mathcal{H}\longrightarrow\mathbb{R}\cup\left\{ +\infty\right\} $
be a closed, convex, proper function, such that $C=\arg \min\Phi\neq\emptyset$.
Let us assume that 

(i) $\:$ $\underset{t\longrightarrow+\infty}{\lim}\epsilon\left(t\right)=0,$

$\vphantom{}$

(ii ) $\:$ $\int_{0}^{+\infty}\epsilon\left(t\right)dt=+\infty.$

Then, for any trajectory $\left(x\left(\cdot\right),\upsilon\left(\cdot\right)\right):\left[0,+\infty\right[\longrightarrow\mathcal{H}\times\mathcal{H}$
solution of (\ref{new-5})- (\ref{new-6}), with $y\left(t\right)=x\left(t\right)+\mu\upsilon\left(t\right)$
the following strong convergence propery holds:
   $$s-\underset{t\longrightarrow+\infty}{\lim}x\left(t\right) = s-\underset{t\longrightarrow+\infty}{\lim}y\left(t\right)=\underset{\arg\min\Phi}{\textrm{Proj}}0.$$
\end{thm}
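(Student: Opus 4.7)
The plan is to pass to the $y$-formulation established just above the statement, where the dynamic has been shown to reduce to the Tikhonov-regularized steepest descent equation
\[
\dot{y}(t)+\nabla\Theta(y(t))+\epsilon(t)\,y(t)=0, \qquad \Theta=\mu\Phi_{\mu},
\]
and then to invoke the classical strong convergence theorem of Reich \cite{Reich}, extended by Cominetti--Peypouquet--Sorin \cite{CPS}, for this (SDC) system. The payoff is immediate: once $y(\cdot)$ is shown to converge strongly to $\mathrm{proj}_{C}0$, the strong convergence of $x(\cdot)$ to the same limit follows from the nonexpansiveness of the proximal mapping and the fact that every point of $C=\arg\min\Phi$ is a fixed point of $\mathrm{prox}_{\mu\Phi}$.

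Concretely, the first step is to record what the preceding computation already establishes: with the specific viscosity $g(\xi)=\tfrac{1}{2\mu}\|\xi\|^{2}+\Phi(\xi)$, the change of variable $y(t)=x(t)+\mu \upsilon(t)$ transforms (\ref{new-5})--(\ref{new-6}) into the displayed equation above, because $\mu\,\partial g(\mathrm{prox}_{\mu\Phi}(y))\ni y$ for every $y\in\mathcal{H}$. Global existence and uniqueness of a strong solution $y(\cdot)$ follow from the Cauchy--Lipschitz theorem exactly as in Section 2, since $\nabla\Theta$ is Lipschitz, the map $y\mapsto \epsilon(t)y$ is linear with locally integrable coefficient, and $\epsilon$ is locally integrable. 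I would also note once and for all that $\arg\min\Theta=\arg\min\Phi_{\mu}=\arg\min\Phi=C$ and that $\Theta\ge 0$ with $\Theta\equiv 0$ on $C$, placing us squarely in the framework of (SDC).

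The second step is to apply the Reich--Browder theorem, in the form proved in \cite{CPS} which does not require monotonicity of $\epsilon(\cdot)$: under the sole assumptions $\epsilon(t)\to 0$ and $\int_{0}^{+\infty}\epsilon(t)\,dt=+\infty$, every trajectory of (\ref{eq:61}) converges strongly to the element of minimal norm of $\arg\min\Theta$, that is, to $\mathrm{proj}_{C}0$. These hypotheses are precisely (i) and (ii), so nothing further is needed at this step.

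For the passage from $y$ to $x$, set $y_{\infty}=\mathrm{proj}_{C}0\in C$. Since $y_{\infty}\in\arg\min\Phi$ one has $0\in\partial\Phi(y_{\infty})$, hence $y_{\infty}=(I+\mu\partial\Phi)^{-1}(y_{\infty})=\mathrm{prox}_{\mu\Phi}(y_{\infty})$. Using the $1$-Lipschitz continuity of the proximal mapping,
\[
\|x(t)-y_{\infty}\|=\|\mathrm{prox}_{\mu\Phi}(y(t))-\mathrm{prox}_{\mu\Phi}(y_{\infty})\|\le\|y(t)-y_{\infty}\|\longrightarrow 0,
\]
which simultaneously gives the strong convergence of $x(\cdot)$ and shows $\|x(t)-y(t)\|\to 0$. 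The only point in the plan that is not entirely routine is ensuring that the monotonicity of $\epsilon$ is not required for the Reich-type result, which is exactly the contribution of \cite{CPS}; beyond citing that refinement, I expect no further obstacles.
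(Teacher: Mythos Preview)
Your proposal is correct and follows essentially the same route as the paper: apply the Cominetti--Peypouquet--Sorin strong convergence result for the (SDC) equation to obtain $y(t)\to\mathrm{proj}_{C}0$, then transfer this to $x(\cdot)$ via the nonexpansiveness of $\mathrm{prox}_{\mu\Phi}$ and the fact that $\mathrm{proj}_{C}0$ is a fixed point of $\mathrm{prox}_{\mu\Phi}$. The paper's proof is identical in structure and invokes the same references.
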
 
\begin{proof} We are in the situation examined in \cite[Theorem 2]{CPS},
which gives the strong convergence of each trajectory $y\left(\cdot\right)$
of (\ref{eq:61}) towards the point of minimal norm in $C=\arg\min\Phi$.
In order to pass from the convergence of $y(\cdot)$ to the convergence
of $x(\cdot)$ we use the relation (\ref{new-3}) 
\[
x\left(t\right)=\textrm{prox}_{\mu\Phi}\left(y\left(t\right)\right)
\]
that links the two variables. From the continuity property of the proximal mapping for the strong topology (indeed, it is a nonexpansive mapping), we deduce that each trajectory $x(\cdot)$ of (\ref{new-5})- (\ref{new-6}) converges strongly  to $\textrm{prox}_{\mu\Phi}\left(\underset{\arg\min\Phi}{\textrm{Proj}}0\right)= \underset{\arg\min\Phi}{\textrm{Proj}}0$.
To obtain this last equality, we use  the fact that the set $C$
of minimizers of $\Phi$ is invariant by the proximal mapping $\textrm{prox}_{\mu\Phi}$,
i.e., 
\[
\textrm{prox}_{\mu\Phi}(y)=y\quad{\rm for\ all\ }y\in C=\mbox{{\rm argmin}}\Phi.
\]

\end{proof}

\section{Perspective}

Let us list some interesting questions to be examined in the future: 

\begin{enumerate}
\item Examine the discrete, algorithmical version, and the corresponding
asymptotic selection property for the forward-backward algorithm. 
\item Study the case where $\lambda(t)$ depends on $t$ in an open-loop form, as in \cite{AS}.
\item Study the case where the Levenberg-Marquart regularization term is given in a closed-loop form, $\lambda(t)= \alpha (\|\dot{x} (t)\|)$  as in
\cite{ARS}.
\item Examine these questions for the  related  dynamical systems which have been considered in \cite{AA}. 
\end{enumerate}

\end{document}